\newif\ifdraft
\definecolor{labelkey}{gray}{0.5}
\newlength{\myarrowsize} 
\newenvironment{diagram*}[2]{%
\[%
\begin{tikzpicture}[>=cmto,baseline=(current bounding box.center),%
	to/.style={->,font=\scriptsize,cap=round},%
	into/.style={cmhook->,font=\scriptsize,cap=round},%
	onto/.style={-cmonto,font=\scriptsize,cap=round},%
	math/.style={matrix of math nodes, row sep=#2, column sep=#1,%
		text height=1.5ex, text depth=0.25ex}]%
}{%
\end{tikzpicture}%
\]%
\ignorespacesafterend%
}
\newcommand{\Dmod}{\mathscr{D}}
\newcommand{\Mmod}{\mathcal{M}}
\newcommand{\shT}{\mathscr{T}}
\newcommand{\derR}{\mathbf{R}}
\newcommand{\derL}{\mathbf{L}}
\newcommand{\ZZ}{\mathbb{Z}}
\newcommand{\QQ}{\mathbb{Q}}
\newcommand{\CC}{\mathbb{C}}
\newcommand*{\sheafhom}{\mathscr{H}\kern -.5pt om}
\newcommand{\shf}[1]{\mathscr{#1}}
\def\overbar#1#2#3{{%
	\setbox0=\hbox{\displaystyle{#1}}%
	\dimen0=\wd0
	\advance\dimen0 by -#2 
	\vbox {\nointerlineskip \moveright #3 \vbox{\hrule height 0.3pt width \dimen0}%
		\nointerlineskip \vskip 1.5pt \box0}%
}}
\newcommand{\shO}{\shf{O}}
\let\@@seccntformat\@seccntformat
\renewcommand*{\@seccntformat}[1]{%
  \expandafter\ifx\csname @seccntformat@#1\endcsname\relax
    \expandafter\@@seccntformat
  \else
    \expandafter
      \csname @seccntformat@#1\expandafter\endcsname
  \fi
    {#1}%
}
\newcommand*{\@seccntformat@subsection}[1]{%
  \textbf{\csname the#1\endcsname.}
}
\let\@paragraph\paragraph
\renewcommand*{\paragraph}[1]{%
	\vspace{0.3\baselineskip}%
	\@paragraph{\textit{#1}}%
}
\newtheorem{theorem}[equation]{Theorem}
\newtheorem*{theorem*}{Theorem}
\newtheorem{lemma}[equation]{Lemma}
\newtheorem*{lemma*}{Lemma}
\newtheorem{corollary}[equation]{Corollary}
\newtheorem{proposition}[equation]{Proposition}
\newtheorem*{proposition*}{Proposition}
\theoremstyle{definition}
\newtheorem*{definition*}{Definition}
\newtheorem{remark}[equation]{Remark}
\newtheorem*{example*}{Example}
\newtheorem*{problem*}{Problem}
\theoremstyle{plain}
\newcommand{\theoremref}[1]{\hyperref[#1]{Theorem~\ref*{#1}}}
\newcommand{\lemmaref}[1]{\hyperref[#1]{Lemma~\ref*{#1}}}
\newcommand{\definitionref}[1]{\hyperref[#1]{Definition~\ref*{#1}}}
\newcommand{\propositionref}[1]{\hyperref[#1]{Proposition~\ref*{#1}}}
\newcommand{\conjectureref}[1]{\hyperref[#1]{Conjecture~\ref*{#1}}}
\newcommand{\corollaryref}[1]{\hyperref[#1]{Corollary~\ref*{#1}}}
\newcommand{\exampleref}[1]{\hyperref[#1]{Example~\ref*{#1}}}
\let\old@caption\caption
\renewcommand*{\caption}[1]{%
	\setcounter{figure}{\value{equation}}%
	\stepcounter{equation}%
	\old@caption{#1}\relax%
}
\newcounter{intro}
\newtheorem{intro-conjecture}[intro]{Conjecture}
\newtheorem{intro-corollary}[intro]{Corollary}
\newtheorem{intro-theorem}[intro]{Theorem}
\newcommand{\parref}[1]{\hyperref[#1]{\S\ref*{#1}}}
\newcommand*\if@single[3]{%
  \setbox0\hbox{${\mathaccent"0362{#1}}^H$}%
  \setbox2\hbox{${\mathaccent"0362{\kern0pt#1}}^H$}%
  \ifdim\ht0=\ht2 #3\else #2\fi
  }
\newcommand*\rel@kern[1]{\kern#1\dimexpr\macc@kerna}
\newcommand*\widebar[1]{\@ifnextchar^{{\wide@bar{#1}{0}}}{\wide@bar{#1}{1}}}
\newcommand*\wide@bar[2]{\if@single{#1}{\wide@bar@{#1}{#2}{1}}{\wide@bar@{#1}{#2}{2}}}
\newcommand*\wide@bar@[3]{%
  \begingroup
  \def\mathaccent##1##2{%
    \if#32 \let\macc@nucleus\first@char \fi
    \setbox\z@\hbox{$\macc@style{\macc@nucleus}_{}$}%
    \setbox\tw@\hbox{$\macc@style{\macc@nucleus}{}_{}$}%
    \dimen@\wd\tw@
    \advance\dimen@-\wd\z@
    \divide\dimen@ 3
    \@tempdima\wd\tw@
    \advance\@tempdima-\scriptspace
    \divide\@tempdima 10
    \advance\dimen@-\@tempdima
    \ifdim\dimen@>\z@ \dimen@0pt\fi
    \rel@kern{0.6}\kern-\dimen@
    \if#31
      \overline{\rel@kern{-0.6}\kern\dimen@\macc@nucleus\rel@kern{0.4}\kern\dimen@}%
      \advance\dimen@0.4\dimexpr\macc@kerna
      \let\final@kern#2%
      \ifdim\dimen@<\z@ \let\final@kern1\fi
      \if\final@kern1 \kern-\dimen@\fi
    \else
      \overline{\rel@kern{-0.6}\kern\dimen@#1}%
    \fi
  }%
  \macc@depth\@ne
  \let\math@bgroup\@empty \let\math@egroup\macc@set@skewchar
  \mathsurround\z@ \frozen@everymath{\mathgroup\macc@group\relax}%
  \macc@set@skewchar\relax
  \let\mathaccentV\macc@nested@a
  \if#31
    \macc@nested@a\relax111{#1}%
  \else
    \def\gobble@till@marker##1\endmarker{}%
    \futurelet\first@char\gobble@till@marker#1\endmarker
    \ifcat\noexpand\first@char A\else
      \def\first@char{}%
    \fi
    \macc@nested@a\relax111{\first@char}%
  \fi
  \endgroup
}
\newcommand{\I}{\mathcal{I}}
\def\ZZ{{\mathbf Z}}
\def\CC{{\mathbf C}}
\def\QQ{{\mathbf Q}}
\newtheorem*{thmA'}{Theorem~A$^\prime$}
\begin{document}

\vspace{\baselineskip}

\title{Hodge filtration, minimal exponent, and local vanishing}

\author[M. Musta\c{t}\u{a}]{Mircea~Musta\c{t}\u{a}}
\address{Department of Mathematics, University of Michigan, 530 Church Street,
Ann Arbor, MI 48109, USA}
\email{{\tt mmustata@umich.edu}}

\author[M.~Popa]{Mihnea~Popa}
\address{Department of Mathematics, Northwestern University, 
2033 Sheridan Road, Evanston, IL
60208, USA} \email{{\tt mpopa@math.northwestern.edu}}

\thanks{MM was partially supported by NSF grant DMS-1701622 and a Simons Fellowship; MP was partially supported by NSF grant
DMS-1700819.}

\subjclass[2010]{14F10, 14F17, 14J17, 32S25}

\begin{abstract}
We bound the generation level of the Hodge filtration on the localization along a hypersurface in terms of its minimal exponent. As a consequence, we obtain a local vanishing theorem for sheaves of forms with log poles. 
These results are extended to $\QQ$-divisors, and are derived from a result of independent interest on the generation level of the Hodge filtration on nearby and vanishing cycles.
\end{abstract}

\maketitle

\makeatletter

\section{Introduction}
Let $X$ be a smooth complex variety of dimension $n$, and $\Dmod_X$ the sheaf of differential
operators on $X$. An important invariant of a filtered $\Dmod_X$-module
$(\Mmod,F)$ of geometric origin is the complexity of its filtration, namely 
how many steps are required to fully determine it. Concretely, the filtration $F$ is generated at level $q$ if
$$F_{\ell}\Dmod_X\cdot F_q\Mmod =F_{q+\ell}\Mmod \quad\text{for all}\quad \ell\geq 0.$$
Here $F_\bullet \Dmod_X$ denotes the standard filtration by the order of differential operators. 

In this paper we give a bound for the generation level of the Hodge filtration on $\Dmod_X$-modules naturally associated to rational multiples of a reduced effective divisor $D$ on $X$, in terms of data provided by the Bernstein-Sato polynomial of $D$. 
This study was initiated by Saito \cite{Saito-HF}, who provided such bounds for special types of singularities. Some general results were later found in \cite{MP1}, \cite{MP2}. We improve them here, using the main result of \cite{MP3}, and also exploit the fact that they are, somewhat surprisingly, related to local vanishing theorems for sheaves of forms with log poles in birational geometry. 

\noindent
{\bf Reduced divisors.}
To highlight the main points with a minimum amount of technicalities, we first restrict our discussion to 
the case when we simply deal with a reduced effective divisor $D$. The corresponding $\Dmod_X$-module is the localization $\shO_X(*D)$, that is, the sheaf of functions with poles of arbitrary order along $D$. 
It is well known that $\shO_X(*D)$ is regular holonomic, and underlies a mixed Hodge module on $X$; therefore it comes endowed with a \emph{Hodge filtration} $F_p \shO_X(*D)$, with $p \ge 0$. See e.g. \cite{MP1} for an in-depth study of this filtration. 
If $D$ is smooth, then the filtration is generated at level $0$, hence from now on we focus on the case when $D$ is singular. We prove:

\begin{intro-theorem}\label{main1}
For every singular divisor $D$, the Hodge filtration on $\shO_X(*D)$ is generated at level $n - 1 - \lceil \widetilde{\alpha}_D \rceil$. 
\end{intro-theorem}

Here $\widetilde{\alpha}_D$ is the \emph{minimal exponent} of $D$, a positive rational number which is defined as the negative of the largest root of the \emph{reduced} Bernstein-Sato polynomial $\tilde{b}_D (s)$; see e.g. \cite{Saito-B}. It is a refined version of the log canonical threshold of the pair $(X, D)$, which is equal to ${\rm min}\{\widetilde\alpha_D, 1\}$. See \S\ref{scn:general} for further details and references. It was Saito who 
 first pointed out in \cite{Saito-HF} the relevance of the invariant $n - 1 - \lceil \widetilde{\alpha}_D \rceil$, proving the bound in Theorem \ref{main1} for isolated semi-quasihomogeneous singularities (when 
 $\widetilde{\alpha}_D$ can be computed explicitly).
 
Since $\widetilde{\alpha}_D > 0$, Theorem \ref{main1} recovers in particular the fact that $F_\bullet \shO_X(*D)$ is always generated at level $n-2$, proved in \cite[Theorem~B]{MP1}. Note also that it is possible to do better than Theorem \ref{main1}: as an extreme case, if $D$ is a singular simple normal crossing 
divisor, then  $F_\bullet \shO_X(*D)$ is generated at level $0$, but $\widetilde{\alpha}_D = 1$. The bound is nevertheless sometimes optimal; for instance, this is the case when $D$ has an isolated quasihomogeneous singularity by \cite[Theorem~0.7]{Saito-HF}.

Moreover, Saito \cite[Theorem~0.4]{Saito-B} showed that $\widetilde{\alpha}_D > 1$ is equivalent to $D$ having rational singularities, and therefore:

\begin{intro-corollary}
If $n\ge 3$ and the divisor $D$ has rational singularities, then the Hodge filtration on $\shO_X(*D)$ is generated at level $n-3$.\footnote{As mentioned above, for $n=2$ the filtration is always generated at level $0$.}
\end{intro-corollary}

This was proved when $D$ has isolated singularities, and conjectured to be true in general, in \cite{MOP}. The 
general conjecture was already verified recently by Kebekus-Schnell \cite[\S1.3]{KS}, as a consequence of a local vanishing conjecture; more on this below. Note that $\widetilde{\alpha}_D$ could however be much larger than $1$, and is in fact optimally bounded above by $n/2$ in \cite{Saito_microlocal} (see also \cite[Theorem~E]{MP3}).


It turns out that the generation level of the Hodge filtration on $\shO_X(*D)$ is intimately linked to a result in birational geometry, namely to local vanishing for pushforwards of bundles of forms with log poles. Consider a log resolution $\mu\colon Y \to X$ of the pair $(X,D)$, which is an isomorphism over $U = X \smallsetminus D$, and denote $E = (\mu^*D)_{\rm red}$.
We showed in \cite[Theorem~17.1]{MP1} that $F_\bullet \shO_X(*D)$ is generated at level $q$ if and only if
$R^i \mu_* \Omega^{n-i}_Y (\log E) = 0$ for $i > q$, so consequently we obtain:
 
\begin{intro-corollary}\label{main2}
With the above notation, we have
$$R^i \mu_* \Omega_Y^{n-i}(\log E) =0\quad\text{for}\quad i > n- 1 - \lceil\widetilde{\alpha}_D \rceil.$$
\end{intro-corollary}

When $i \ge n-1$ this is shown by elementary methods in \cite[Theorem~B]{MP1}, leading to the coarse bound $n-2$ for the generation level of the Hodge filtration mentioned above. When $D$ has rational singularities and $i = n-2$, it is proved in \cite{MOP} in the isolated singularities case, and can be deduced in general from a
vanishing statement obtained by Kebekus-Schnell \cite[Theorem~1.9]{KS}, which answers \cite[Conjecture~A]{MOP}. 
Using Corollary \ref{main2}, we can in fact obtain a strengthening of this conjecture/statement in the absolute case of a 
\emph{reduced singular hypersurface}: by this here we mean a singular complex scheme $D$, reduced but not necessarily irreducible,
 that can be embedded as a hypersurface in a smooth variety. In this case $D$ has an associated minimal exponent $\widetilde{\alpha}_D$, independent of the embedding (since this is the case already for the Bernstein-Sato polynomial). We consider a resolution of singularities $\mu\colon \widetilde{D} \to D$, given by the disjoint union of resolutions of the irreducible components of $D$. We further assume that $\mu$ is an isomorphism over the smooth locus of $D$ and  the reduced inverse image 
of the singular locus of $D$ is a simple normal crossing divisor $E$ on $\widetilde{D}$. We then have~\footnote{Note that $D$ has rational singularities if and only if $\widetilde{\alpha}_D > 1$, so the case $i = n-2$ corresponds to the statements in \emph{loc. cit.}}

\begin{intro-theorem}\label{local_strong}
With the above notation, if $\dim(D)=n-1$, then 
$$R^i  \mu_* \Omega^{n-1-i}_{\widetilde D} (\log E) = 0 \,\,\,\,\,\,{\rm for~all} \,\,\,\,\,\,i > n -1 - \lceil \widetilde{\alpha}_D \rceil.$$
\end{intro-theorem}

We emphasize that here the overall strategy is reversed: we first show the generation bound in Theorem \ref{main1} using methods from the theory of (Hodge) $\Dmod$-modules, and then deduce the birational Corollary \ref{main2}, which in turn is used to prove
Theorem~\ref{local_strong}. At the moment we do not know how to approach the latter vanishing results via more standard methods 
in birational geometry.

\noindent
{\bf Rational multiples.} Following  \cite{MP2}, \cite{MP3}, we also consider a multiple $\alpha D$, where 
$\alpha$ is a positive rational number and $D$ is a reduced effective divisor on $X$, as above. 
The set-up is local: assuming that $D$ is defined by a regular function $f$, the natural replacement for the localization $\shO_X [1/f]$ is the $\Dmod_X$-module
$$\Mmod ( f^{-\alpha}) : = \shO_X [1/f]  f^{-\alpha},$$
the free rank $1$ module over $\shO_X [1/f] $ generated by the formal symbol $f^{-\alpha}$; see \S\ref{scn:general}. This is a direct summand of a mixed Hodge module, and so analogously it comes endowed with a Hodge filtration $F_p \Mmod ( f^{-\alpha})$, with $p \ge 0$. Again, if $D$ is smooth, then this filtration
is generated at level $0$, hence from now on we focus on the case when $f$ defines a singular hypersurface. 

Theorem \ref{main1} and Corollary \ref{main2} above are then special cases (when $\alpha =1$) of the following two statements that will be the focus of the paper. 

\begin{intro-theorem}\label{main3}
If $f$ defines a singular reduced hypersurface, then 
the Hodge filtration on $\Mmod ( f^{-\alpha})$ is generated at level $n - \lceil \widetilde{\alpha}_f + 
\alpha \rceil$. 
\end{intro-theorem}

In the special case when $D$ has an isolated quasihomogeneous singularity, by analogy with the reduced case in \cite{Saito-HF}, this result was conjectured in \cite{Popa} and proved in \cite{Zhang}. Note also that Theorem~\ref{main3} recovers the second statement of \cite[Theorem~10.1]{MP2}, namely that the filtration on $\Mmod ( f^{-\alpha})$ is always generated at level $n-1$.

Consider  now a log resolution $\mu\colon Y \to X$ of the pair $(X,D)$ as above, and $E = (\mu^*D)_{\rm red}$. According 
to \cite[Theorem~10.1]{MP2}, the statement of Theorem \ref{main3} is equivalent to the following general form of local vanishing:

\begin{intro-corollary}\label{main4}
With the above notation, we have
$$R^i\mu_*\big(\Omega_Y^{n-i}(\log E)\otimes_{\shO_Y}\shO_Y(-\lceil  \mu^*\alpha D\rceil)\big)=0\quad\text{for}\quad i> n-\lceil\widetilde{\alpha}_f + \alpha\rceil.$$
\end{intro-corollary}

Recall for completeness that it is always the case that 
$$R^i\mu_*\big(\Omega_Y^j (\log E)\otimes_{\shO_Y}\shO_Y(-\lceil \mu^*\alpha D\rceil)\big)=0\quad\text{for}\quad 
i + j> n.$$
This is proved in \cite[Corollary~C]{MP2}, still using methods from the theory of mixed Hodge modules, but of 
a different flavor.

\noindent
{\bf Hodge ideals.}
The Hodge filtration on $\Mmod(f^{-\alpha})$ is best expressed and studied in terms of
the \emph{Hodge ideals} of $\alpha D$. According to \cite[\S4]{MP2}, for each $p \ge 0$ there is a coherent sheaf of ideals 
$I_p(\alpha D)$ on $X$ such that 
$$F_p \Mmod ( f^{-\alpha}) = I_p (\alpha D) \otimes \shO_X (pD) f^{-\alpha}.$$
Therefore Theorem \ref{main3} provides an effective bound describing which higher Hodge ideals of  $\alpha D$ are fully determined by lower ones. This type of result is very useful for concrete calculations of Hodge ideals, see \cite{MP1} and \cite{MP2}.

\begin{intro-corollary}
For every nonnegative integers $\ell$ and $p$, with $p \ge n - \lceil \widetilde{\alpha}_f + \alpha \rceil$, we have
$$F_\ell \Dmod_X\cdot\big(I_p(\alpha D)\otimes \shO_X(pD)f^{-\alpha}\big) =  I_{p +\ell}(\alpha D)\otimes \shO_X \big((p+\ell)D\big)f^{-\alpha}.$$
\end{intro-corollary}  

\noindent
{\bf Nearby and vanishing cycles.}
All the above results are consequences of a statement of independent interest regarding the generation level of the Hodge filtration on the graded quotients of the $V$-filtration associated to 
the regular function $f \in \shO_X(X)$.
Concretely, the $V$-filtration is defined on the the left $\Dmod_{X\times\CC}$-module $\iota_+\shO_X$, the push-forward of $\shO_X$ via the graph embedding 
$$\iota\colon X\hookrightarrow X\times\CC, \,\,\,\,\, x\mapsto \big(x,f(x)\big),$$
with respect to the hypersurface $\{t =0 \}$, where $t$ is the coordinate on $\CC$.
Recalling that this is a (discrete) decreasing filtration, we consider ${\rm Gr}^\alpha_V(\iota_+\shO_X) : = V^\alpha\iota_+\shO_X/ V^{> \alpha}\iota_+\shO_X$. These are $\Dmod_X$-modules that underlie Hodge modules  supported on the graph embedding of $X$; in particular they come endowed with a Hodge filtration $F_\bullet {\rm Gr}^\alpha_V(\iota_+\shO_X)$ induced by that  on $\iota_+\shO_X$. The cases $\alpha = 0$ and $\alpha \in (0,1]$ are intimately related to the vanishing, respectively nearby, cycles of $f$.  For details see \S\ref{scn:general} and \S\ref{scn:NV}. The main result we prove is:

\begin{intro-theorem}\label{NVC_generation}
If $f$ defines a singular, reduced hypersurface, and $\alpha \in [0,1]$ is a rational number, then the Hodge filtration on ${\rm Gr}^\alpha_V(\iota_+\shO_X)$ is generated at 
level $n - \lceil \widetilde{\alpha}_f + \alpha \rceil+1$.
\end{intro-theorem}

The proof of this theorem is the technical core of the paper. 
More precisely, we describe concretely the associated graded quotients of the Hodge filtration on these 
$\Dmod_X$-modules in the range below the minimal exponent of $f$; see Proposition \ref{filtration_nearby}. Using this, we apply a homological criterion for the generation level of the filtration on special filtered $\Dmod_X$-modules $(\Mmod, F)$ via the duality functor. This is proved in Proposition \ref{refined_criterion}, and is inspired by a duality approach to generation in \cite{Saito_microlocal}. In order to deduce Theorem \ref{main3} from Theorem \ref{NVC_generation}, the key tool is to reinterpret the main result of \cite{MP3} as a connection between the Hodge filtration on $\Mmod (f^{-\alpha})$ and the induced Hodge filtration on $V^\alpha$; see Proposition \ref{filtration_on_image}.

\noindent
{\bf Bounds in terms of singularity invariants in birational geometry.}
We conclude by noting that the minimal exponent $\widetilde{\alpha}_f$ can be bounded below in terms of basic invariants of the singularity, or in terms of discrepancies on a log resolution. This can be translated into bounds of a somewhat different flavor in the statements above.

Consider a log resolution $\mu\colon Y \to X$ of the pair $(X,D)$ as above, in the neighborhood of a (singular) point $x \in D$. Assuming in addition that the strict transform $\widetilde{D}$ of $D$ is smooth, we define integers $a_i$ and $b_i$ by the expressions
$$\mu^*D = \widetilde{D} + \sum_{i=1}^m a_iF_i\quad\text{and}\quad
K_{Y/X} = \sum_{i=1}^m b_iF_i,$$
where $F_1,\ldots,F_m$ are the prime exceptional divisors, and set 
$$\gamma : = \underset{i=1, \ldots, m}{\rm min} \left\{\frac{b_i +1}{a_i}\right\}.$$
Denote also by $d \ge 2$ the multiplicity of $D$ at $x$, and by $r$ the dimension of the singular locus of the projectivized tangent cone ${\mathbf P}(C_xD)$  (declaring that $r=-1$ if ${\mathbf P}(C_xD)$ is smooth). We then have the following lower bounds in a neighborhood of $x$:

$\bullet$\,\,\,\,\,\,$\widetilde{\alpha}_f \ge \gamma$. 

$\bullet$\,\,\,\,\,\,$\widetilde{\alpha}_f  \ge  \frac{n-r-1}{d}$.

\smallskip

The first is \cite[Corollary~D]{MP3} and the second is \cite[Theorem~E(3)]{MP3}. Note that, unlike 
$\widetilde{\alpha}_f$, $\gamma$ depends on the choice of log resolution. Finally, we also have:

$\bullet$~$k_0 : = \lfloor \widetilde{\alpha}_f - \alpha\rfloor$ is the $k$-log canonicity level of the pair $(X, \alpha D)$, according to \cite[Corollary~C]{MP3}.

We recall that $(X,\alpha D)$ is $0$-log canonical if it is log canonical, while being $k$-log canonical for $k\geq 1$ is a refinement of the statement that $D$ has rational singularities. It essentially means that the Hodge filtration on $\Mmod(f^{-\alpha})$ 
 is as simple as possible up to level $k$, namely equal to the pole order filtration; 
 the upshot of this paper is that this condition
also  imposes a bound on the generation level  of this Hodge filtration.

Further general properties of the minimal exponent $\widetilde{\alpha}_f$, and open problems, can be found in \cite[\S6]{MP3}.

\noindent
{\bf Acknowledgement.} 
We thank the referee for very useful comments that helped us improve the exposition.

\section{Preliminaries}

\subsection{Hodge filtration, $V$-filtration, and minimal exponent}\label{scn:general}
Let $X$ be a smooth $n$-dimensional complex algebraic variety and $f\in\shO_X(X)$ a nonzero regular function. 
Consider the graph embedding 
$$\iota\colon X\hookrightarrow X\times\CC, \,\,\,\,\, x\mapsto \big(x,f(x)\big)$$ 
and the left $\Dmod_{X\times\CC}$-module $\iota_+\shO_X$, as well as the corresponding right 
$\Dmod_{X\times\CC}$-module $\iota_+\omega_X$. A detailed discussion of the material in the paragraph below can be found for instance in \cite[\S2]{MP3}. We denote by $t$ the coordinate on $\CC$. Recall that we have $$\iota_+\shO_X \simeq \shO_X[t]_{f-t}/\shO_X[t],$$ 
with the obvious $\Dmod_{X\times\CC}$-module structure. Denoting by $\delta$ the class of $\frac{1}{f-t}$, every element in $\iota_+\shO_X$ can be written uniquely as
$$\sum_{i\geq 0}v_i \partial_t^i\delta,$$
with $v_i\in\shO_X$, only finitely many nontrivial. We clearly have the relation $t\delta=f\delta$.
With this description, multiplication by $t$ is given by
$$
t(v \partial_t^i\delta)=fv\partial_t^i\delta-iv\partial_t^{i-1}\delta
$$
and the action of a derivation $P\in {\rm Der}_{\CC}(\shO_X)$ is given by
$$P(v\partial_t^i\delta)=P(v)\partial_t^i\delta-P(f)v\partial_t^{i+1}\delta.$$
Recall also that the (trivial) Hodge filtration on $\shO_X$ induces a Hodge filtration on $\iota_+\shO_X$
given by 
\begin{equation}\label{Hodge_O}
F_{p+1} (\iota_+\shO_X) = \sum_{i=0}^p \shO_X \partial_t^i\delta
\end{equation}
(see, for example, \cite[(1.8.6)]{Saito-B}). 
We note that the shift by $1$ is needed in order to ensure compatibility when applying the convention for shifting filtrations as we pass from left to right filtered 
$\Dmod$-modules on $X$ and $X\times \CC$ respectively; see \S\ref{scn:NV}.

We next consider the rational $V$-filtration on $\iota_+ \shO_X$ with respect to $t$. Recall that this is an exhaustive, decreasing, discrete, and left continuous filtration
$(V^\alpha \iota_+ \shO_X)_{\alpha\in\QQ}$.
It is defined uniquely by a number of properties listed for instance in \cite[\S2]{MP3}.
The Hodge filtration on $\iota_+\shO_X$ induces a filtration on each $V^{\alpha}\iota_+\shO_X$ and thus the Hodge filtration
on ${\rm Gr}_V^{\alpha}(\iota_+\shO_X)=V^{\alpha}\iota_+\shO_X/V^{>\alpha}\iota_+\shO_X$. 

As is standard, we denote by $b_f (s)$ the Bernstein-Sato polynomial of $f$. Assuming that $D:={\rm div}(f)\neq 0$, the polynomial $(s+1)$ divides $b_f (s)$, and $\tilde{b}_f (s) = b_f (s) /(s+1)$ is the reduced Bernstein-Sato polynomial of $f$.  Following \cite{Saito-MLCT}, we denote by $\widetilde\alpha_f$ the negative of the largest root of $\tilde{b}_f (s)$. This is a positive rational number, and we use the convention that $\widetilde\alpha_f = \infty$ if $b_f (s)=s+1$, which happens precisely when $D$ is smooth. This invariant is called the \emph{minimal exponent} of $f$, see \cite{Saito-B}, and is a refined version of the  log canonical threshold of $f$, which is equal to ${\rm min}\{\widetilde\alpha_f, 1\}$. See \cite[\S6]{MP3} for a detailed discussion.

A crucial point is the following  link between the minimal exponent and  the $V$-filtration, combining the statements of \cite[Lemma~5.3]{MP3} and \cite[Corollary~6.1]{MP3}. 

\begin{lemma}\label{lem_char_min_exponent}
For an integer $p \ge 0$ and $\alpha\in (0,1]$, we have 
$$\partial_t^p \delta \in  V^\alpha\iota_+\shO_X \iff \widetilde{\alpha}_f \ge p + \alpha.$$
\end{lemma}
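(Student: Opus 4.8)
The plan is to pass to the microlocalization $\tilde{\Mmod} := \iota_+\shO_X[\partial_t^{-1}]$, on which $\partial_t$ acts invertibly, and to compare the ordinary $V$-filtration on $\iota_+\shO_X$ with the \emph{microlocal} $V$-filtration $\tilde{V}^\bullet$ on $\tilde{\Mmod}$. The reason this is the right move is that the \emph{reduced} Bernstein--Sato polynomial $\tilde{b}_f$ --- and hence $\widetilde{\alpha}_f$ --- governs $\tilde{V}^\bullet$ in the same way that $b_f$ governs $V^\bullet$, the discarded factor $(s+1)$ being exactly what disappears once $\partial_t$ is inverted. The whole statement should then fall out of a chain of equivalences.

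First I would record the comparison between the two filtrations below level one: for every $\alpha \le 1$ one has $V^\alpha\iota_+\shO_X = \tilde{V}^\alpha\tilde{\Mmod} \cap \iota_+\shO_X$. Since $p \ge 0$ forces $\partial_t^p\delta \in \iota_+\shO_X$, and since $\alpha \in (0,1]$, this immediately reduces the membership $\partial_t^p\delta \in V^\alpha\iota_+\shO_X$ to the microlocal membership $\partial_t^p\delta \in \tilde{V}^\alpha\tilde{\Mmod}$. Next, exploiting the key advantage of the microlocal filtration, namely that $\partial_t\colon \tilde{V}^\beta \xrightarrow{\ \sim\ } \tilde{V}^{\beta-1}$ is an isomorphism for \emph{every} $\beta$, I would apply $\partial_t^{-p}$ to rewrite this as $\delta \in \tilde{V}^{p+\alpha}\tilde{\Mmod}$. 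Thus the problem collapses, for both directions at once, to computing the microlocal order of the single generator $\delta$.

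The final step is the identification $\delta \in \tilde{V}^\beta\tilde{\Mmod} \iff \widetilde{\alpha}_f \ge \beta$, which is precisely the statement that $\tilde{b}_f$ is the microlocal $b$-function of $\delta$: its largest root $-\widetilde{\alpha}_f$ pins down the jump of $\delta$ in $\tilde{V}^\bullet$, so that $\delta$ lies in $\tilde{V}^\beta$ exactly when $\beta \le \widetilde{\alpha}_f$. Chaining the three steps together yields $\partial_t^p\delta \in V^\alpha\iota_+\shO_X \iff p+\alpha \le \widetilde{\alpha}_f$, which is the claim. (In the reduced language of the excerpt, the first two steps are the content of \cite[Lemma~5.3]{MP3} and the last of \cite[Corollary~6.1]{MP3}.)

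The main obstacle is the comparison in the first step, specifically the equality $V^1 = \tilde{V}^1 \cap \iota_+\shO_X$ \emph{at the boundary value} $\alpha = 1$. Away from the boundary the agreement of the ordinary and microlocal filtrations is routine, but $\alpha = 1$ is exactly the index at which the discarded factor $(s+1)$ of $b_f$ could create a genuine discrepancy between $V$ and $\tilde{V}$; controlling this is what forces the hypothesis $\alpha \le 1$ and is the technical heart of the argument. One must also take care that the microlocal order of $\delta$ equals $\widetilde{\alpha}_f$ on the nose rather than merely asymptotically, which uses the left-continuity of the filtration at the boundary value $\beta = \widetilde{\alpha}_f$.
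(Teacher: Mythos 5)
Your proposal is correct and follows essentially the same route as the paper, which establishes this lemma by citing \cite{MP3} (Lemma~5.3 and Corollary~6.1): those results are proved exactly via the comparison $V^\alpha = \tilde{V}^\alpha \cap \iota_+\shO_X$ for $\alpha \le 1$, the invertibility of $\partial_t$ on the microlocal $V$-filtration, and Saito's identification of $\widetilde{\alpha}_f$ with the microlocal order of $\delta$ coming from $\tilde{b}_f$ \cite{Saito_microlocal}. You correctly isolate the boundary case $\alpha = 1$ of the comparison as the delicate point, and since you (like the paper) may legitimately take the cited microlocal results as known, there is no gap.
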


\medskip

For a $\QQ$-divisor $E$ on $C$, we denote by $\I (E)$ its multiplier ideal; see \cite[Chapter~9]{Lazarsfeld}. If $D =  {\rm div} (f)$, $\gamma >0$ is a rational number, and $E = \gamma D$, we will also 
use the notation $\I (f^\gamma)$ for $\I (E)$. The main result of \cite{BS} states that 
for every $\alpha>0$, we have
\begin{equation}\label{multiplier}
\I \big( f^{\alpha - \epsilon} \big) = V^\alpha\iota_+\shO_X \,\,\,\,\,\,{\rm for}\,\,\,\,0<\epsilon\ll 1.
\end{equation}

In order to define and study Hodge ideals for $\QQ$-divisors, in \cite{MP2} and \cite{MP3} we considered for each $\alpha > 0 $
 the twisted localization $\Dmod_X$-module
$$\Mmod (f^{-\alpha}) : = \shO_X (*D) f^{-\alpha},$$
with $D = {\rm div}(f)$, i.e. the free $\shO_X(*D)$-module of rank $1$ with generator the symbol $f^{-\alpha}$, with the action of derivations of $\shO_X$ given by 
$$P(wf^{-\alpha}) :=\left(P(w) - \alpha w\frac{ P(f)}{f}\right)f^{-\alpha}.$$
The $\Dmod_X$-module $\Mmod(f^{-\alpha})$  is a filtered direct summand of a $\Dmod_X$-module underlying a mixed Hodge module; see \cite[\S2]{MP2}. In particular, it is regular holonomic, with quasi-unipotent monodromy, and admits a Hodge filtration $F_p \Mmod(f^{-\alpha})$, with $p \ge 0$.
It is shown in \cite[\S4]{MP2} that if $Z$ is the support of $D$, then we can write
$$F_p \Mmod(f^{-\alpha}) = I_p (\alpha D) \otimes \shO_X (pZ) f^{-\alpha},$$
for an ideal $I_p (\alpha D)$, the \emph{$p$-th Hodge ideal of $\alpha D$}.

For every $\alpha \in \QQ$, we have an isomorphism of $\Dmod_X$-modules
\begin{equation}\label{identification}
\Mmod (f^{-\alpha}) \to \Mmod (f^{-\alpha -1}), \,\,\,\,\,\, w f^{-\alpha} \mapsto (wf) f^{-\alpha -1},
\end{equation}
which preserves the Hodge filtration; see \cite[\S2]{MP2}. As a special case, we naturally identify $\Mmod (f^{-1})$ with the usual localization $\shO_X(*D)$.
In particular, when $D$ is reduced and $\alpha=1$, this gives the Hodge ideals considered in \cite{MP1}.

An important input for this paper is the main result of \cite{MP3}, comparing the Hodge ideals and the $V$-filtration. We only state the case when 
$D={\rm div}(f)$ is reduced. We use the notation $Q_i(x)=\prod_{j=0}^{i-1}(x+j)$, with the convention that $Q_0=1$.

\begin{theorem}[{\cite[Theorem ~A$^\prime$]{MP3}}]\label{HI-Vfil}
If $f$ defines a reduced divisor $D$ and $\alpha$ is a positive rational number, then for every $p\geq 0$ we have
$$
I_p(\alpha D)=\left\{\sum_{j=0}^pQ_j(\alpha)f^{p-j}v_j ~|~ \sum_{j=0}^pv_j\partial_t^j\delta\in V^{\alpha}\iota_+\shO_X\right\}.
$$
\end{theorem}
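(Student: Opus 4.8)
The plan is to characterize the Hodge filtration $F_p\Mmod(f^{-\alpha})$ explicitly by relating it, via the localization isomorphisms \eqref{identification}, to the Hodge filtration on $\iota_+\shO_X$ and the $V$-filtration on it. First I would recall how the generator $f^{-\alpha}$ of $\Mmod(f^{-\alpha})$ maps into the graph-embedding picture: under the identification $\iota_+\shO_X\simeq\shO_X[t]_{f-t}/\shO_X[t]$, with $\delta$ the class of $\tfrac{1}{f-t}$, one checks that $\partial_t^j\delta$ corresponds (up to the sign $(-1)^j j!$) to $\tfrac{1}{(f-t)^{j+1}}$, and that the twisted symbol $f^{-\alpha}$ is detected through the action of $t$ via the relation $t\delta=f\delta$ together with $t(v\partial_t^i\delta)=fv\partial_t^i\delta-iv\partial_t^{i-1}\delta$. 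The combinatorics of iterating this relation produces exactly the polynomials $Q_j(x)=\prod_{j'=0}^{j-1}(x+j')$: expanding $(t-f)$-type operators applied to $\delta$ generates the shifted products $Q_j(\alpha)$ as the coefficients governing how $\partial_t^j\delta$ contributes to $f^{-\alpha}$-type sections. This is where the precise shape $\sum_{j=0}^p Q_j(\alpha)f^{p-j}v_j$ of the claimed formula comes from.

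Next I would set up the comparison of Hodge filtrations. By \cite[\S2]{MP2} the module $\Mmod(f^{-\alpha})$ is a filtered direct summand of a mixed Hodge module, and its Hodge filtration is compatible with the $V$-filtration on $\iota_+\shO_X$ in a way recorded in the main result of \cite{MP3}. The strategy is to identify $F_p\Mmod(f^{-\alpha})=I_p(\alpha D)\otimes\shO_X(pD)f^{-\alpha}$ with the image, under a natural map, of $F_{p+1}(\iota_+\shO_X)\cap V^\alpha\iota_+\shO_X$, using the description \eqref{Hodge_O} of the induced Hodge filtration $F_{p+1}(\iota_+\shO_X)=\sum_{i=0}^p\shO_X\partial_t^i\delta$. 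Concretely, an element $\sum_{j=0}^p v_j\partial_t^j\delta$ of $F_{p+1}(\iota_+\shO_X)$ lies in $V^\alpha$ precisely when it satisfies the $V$-filtration membership condition; pushing it through the identification to $\Mmod(f^{-\alpha})$ and reading off the coefficient of $f^{-\alpha}$ after clearing denominators produces the section $\sum_{j=0}^p Q_j(\alpha)f^{p-j}v_j$. Since $F_p\Mmod(f^{-\alpha})=I_p(\alpha D)\otimes\shO_X(pD)f^{-\alpha}$ and $f^{p-j}v_j$ is the natural generator pattern for $\shO_X(pD)$-valued sections, the ideal membership $\sum_j Q_j(\alpha)f^{p-j}v_j\in I_p(\alpha D)$ is exactly equivalent to the $V^\alpha$-membership of $\sum_j v_j\partial_t^j\delta$.

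The main obstacle, and the genuinely technical heart, will be verifying that the Hodge filtration on $\Mmod(f^{-\alpha})$ corresponds \emph{exactly} (and not merely up to the pole-order filtration) to the intersection $F_{p+1}(\iota_+\shO_X)\cap V^\alpha$ under this identification. The pole-order filtration always gives a containment in one direction, so the delicate point is the reverse inclusion, which is precisely the content of the comparison theorem of \cite{MP3}; I would invoke that result to pin down the Hodge filtration rather than re-derive it, and restrict as stated to the reduced case $D=\Div(f)$ where the compatibility is cleanest. A secondary technical point is controlling the change of generator between $\partial_t^j\delta$ and $\tfrac{1}{(f-t)^{j+1}}$, and the resulting emergence of $Q_j(\alpha)$: one must check that the factorials and binomial coefficients arising from differentiating $(f-t)^{-1}$ and from iterating multiplication by $t$ recombine into the clean product $Q_j(\alpha)=\prod_{j'=0}^{j-1}(\alpha+j')$. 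Once these identifications are in place, the stated equality follows by comparing the two descriptions of $F_p\Mmod(f^{-\alpha})$ term by term, using \lemmaref{lem_char_min_exponent} only implicitly through the structure of the $V$-filtration.
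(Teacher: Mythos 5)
There is a genuine gap, and it is one of circularity rather than of detail. The paper itself offers no proof of this statement: Theorem~\ref{HI-Vfil} is imported verbatim as Theorem~A$'$ of \cite{MP3}, and is used as a black box. Your proposal appears to sketch a proof, but at the decisive step you write that the reverse inclusion ``is precisely the content of the comparison theorem of \cite{MP3}; I would invoke that result to pin down the Hodge filtration rather than re-derive it.'' The comparison theorem of \cite{MP3} \emph{is} the statement being proved, so your argument reduces the theorem to itself. Everything else in your sketch is formal bookkeeping that both inclusions would pass through in any case: the correspondence between $\sum_j v_j\partial_t^j\delta$ and $\sum_j Q_j(\alpha)f^{p-j}v_j$ is exactly the map $\tau_\alpha$ that the paper introduces afterwards, and the verification that the coefficients $Q_j(\alpha)$ arise from the relations $t\delta=f\delta$ and $P(v\partial_t^i\delta)=P(v)\partial_t^i\delta-P(f)v\partial_t^{i+1}\delta$ is carried out in Lemma~\ref{lem1}. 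Note also the logical order in the paper is the opposite of yours: Proposition~\ref{filtration_on_image}, which asserts $F_p\Mmod(f^{-\alpha})=\tau_\alpha\bigl(F_{p+1}V^\alpha\iota_+\shO_X\bigr)$ and is essentially your proposed ``image of $F_{p+1}\cap V^\alpha$'' description, is \emph{deduced from} Theorem~\ref{HI-Vfil} in one line; it cannot serve as an input to it.

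Concretely, what is missing is all of the actual content of Theorem~A$'$ in \cite{MP3}: one must compare the Hodge filtration on $\Mmod(f^{-\alpha})$, defined there through the birational description of Hodge ideals in \cite{MP2} (log resolutions and filtered direct images), with the $V$-filtration on $\iota_+\shO_X$, using Saito-type strictness and compatibility results for the bifiltered module $(\iota_+\shO_X,F,V)$; none of this can be recovered from the formal $t$- and $\partial_t$-relations alone, and the pole-order containment you mention only gives one direction up to denominators, not the exact equality of ideals. A small secondary slip: $\partial_t^j\delta$ is the class of $j!/(f-t)^{j+1}$, with no sign $(-1)^j$, since $\partial_t(f-t)^{-1}=(f-t)^{-2}$. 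If your intent was merely to quote the theorem from \cite{MP3}, as the paper does, you should say so plainly rather than present the citation as the core of a proof of that same result.
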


\subsection{Nearby and vanishing cycles}\label{scn:NV}
Later on we will need bounds for the generation level of the Hodge filtration on nearby and vanishing 
cycles. To this end we will make use of the duality functor ${\mathbf D}$ on filtered $\Dmod$-modules \cite[\S2.4]{Saito-MHP}.
In order to apply duality, we will pass to the corresponding right $\Dmod_X$-modules.

We recall that there is an equivalence of categories between filtered left and right $\Dmod_X$-modules. Given a filtered left $\Dmod_X$-modules
$(\Mmod,F)$, we denote by $(\Mmod^r,F)$ the corresponding filtered right $\Dmod_X$-module. At the level of $\shO_X$-modules we have
$\Mmod^r=\omega_X\otimes_{\shO_X}\Mmod$, while the filtration on $\Mmod^r$ is given by
$$F_{p-n}\Mmod^r=\omega_X\otimes_{\shO_X}F_p\Mmod\quad\text{for all}\quad p\in\ZZ,$$
where $n=\dim(X)$.

For right $\Dmod_X$-modules it is customary to use the increasing $V$-filtration. This is related to the $V$-filtration on the corresponding 
left $\Dmod_X$-module as follows. If $\Mmod$ is a left $\Dmod_{X\times\CC}$-module and we consider the $V$-filtrations with respect to the coordinate $t$
on $\CC$, then
$$V_{\alpha}\Mmod^r=\omega_{X\times\CC}\otimes_{\shO_{X\times\CC}}V^{-\alpha}\Mmod=\omega_X\otimes_{\shO_X}V^{-\alpha}\Mmod,$$
where we identify in the obvious way $\omega_{X\times\CC}$ with the pull-back of $\omega_X$.

It is also customary, for a filtered $\Dmod_X$-module $(\Mmod,F)$ and an integer $q$, to denote 
$(\Mmod,F)(q)=\big(\Mmod,F[q]\big)$, with 
$$F[q]_p\Mmod=F_{p-q}\Mmod \,\,\,\,\,\,{\rm  for~all} \,\,\,\, p\in\ZZ.$$

Let now $(\Mmod,F)$ be the filtered right $\Dmod_{X\times\CC}$-module underlying a pure polarizable Hodge module of weight $d$. Recall that the polarization induces an isomorphism ${\mathbf D}(\Mmod,F)\simeq (\Mmod,F)(d)$. The nearby and vanishing cycles of $(\Mmod,F)$ with respect to $t$ are given, respectively,  by 
$$\Psi_t(\Mmod,F)=\bigoplus_{-1\leq\beta<0}\big({\rm Gr}_{\beta}^V(\Mmod),F\big)(1)\quad {\rm and} \quad  \Phi_{t,1}(M,F)=\big({\rm Gr}_0^V(\Mmod),F\big).$$
We also use the notation $\Psi_{t,\beta}(\Mmod,F)$ for $\big({\rm Gr}_{\beta}^V(\Mmod),F\big)(1)$, when $\beta \in (-1, 0)$, but $\Psi_{t,1}(\Mmod,F)$ for $\big({\rm Gr}^V_{-1}(\Mmod),F\big)(1)$.

It is a general fact that the duality functor commutes with nearby and vanishing cycles. 
The results that follow can be found in \cite[Theorem~1.6]{Saito_duality}. Concretely, we have
canonical isomorphisms 
$${\mathbf D}\Psi_t(\Mmod,F)(1)\simeq \Psi_t{\mathbf D}(\Mmod,F)\quad\text{and}\quad
{\mathbf D}\Phi_{t,1}(\Mmod,F)\simeq \Phi_{t,1}{\mathbf D}(\Mmod,F).$$
Using the fact that ${\mathbf D}(\Mmod,F)\simeq \big(\Mmod,F\big)(d)$,
we obtain isomorphisms
$${\mathbf D}\Psi_t(\Mmod,F)\simeq \Psi_t(\Mmod,F)(d-1)\quad\text{and}\quad {\mathbf D}\Phi_{t,1}(\Mmod,F)\simeq \Phi_{t,1}(\Mmod,F)(d).$$
We can in fact be more precise about the first of these isomorphisms; 
there is a canonical isomorphism
\begin{equation}\label{eq0_dual_nearby_cycles}
{\mathbf D}\Psi_{t,1}(\Mmod,F)\simeq \Psi_{t,1}(\Mmod,F)(d-1)
\end{equation}
and
for every $\beta\in (-1,0)$, there is a canonical isomorphism
\begin{equation}\label{eq_dual_nearby_cycles}
{\mathbf D}\Psi_{t,\beta}(\Mmod,F)\simeq \Psi_{t,-\beta-1}(\Mmod,F)(d-1).
\end{equation}

In what follows, we will only be interested in the case when $(\Mmod,F)$
is the filtered right $\Dmod_{X\times\CC}$-module 
$(\iota_+\omega_X,F)$ corresponding to $(\iota_+\shO_X,F)$. Note that in this case we have $d=n$,
hence the isomorphism (\ref{eq0_dual_nearby_cycles}) gives
\begin{equation}\label{eq2_dual_nearby_cycles}
{\mathbf D}{\rm Gr}_{-1}^V(\iota_+\omega_X)\simeq {\rm Gr}^V_{-1}(\iota_+\omega_X)(1+n)
\end{equation}
while the isomorphism (\ref{eq_dual_nearby_cycles}) gives
\begin{equation}\label{eq3_dual_nearby_cycles}
{\mathbf D}{\rm Gr}_{\beta}^V(\iota_+\omega_X)\simeq {\rm Gr}^V_{-1-\beta}(\iota_+\omega_X)(1+n)
\quad\text{for every}\quad \beta\in (-1,0).
\end{equation}
Similarly, we have
\begin{equation}\label{eq_vanishing_cycles}
{\mathbf D}{\rm Gr}_0^V(\iota_+\omega_X)\simeq {\rm Gr}_0^V(\iota_+\omega_X)(n).
\end{equation}

Finally, we note that since the Hodge filtration on ${\rm Gr}_{\beta}^V(\iota_+\omega_X)$ is induced by that on $\iota_+\omega_X$,
which is the filtered right $\Dmod_{X\times\CC}$-module corresponding to $\iota_+\shO_X$, using the convention above on upper and lower indexed $V$-filtrations we have 
\begin{equation}\label{eq_comp_V_filtrations}
F_{p-n-1}{\rm Gr}_{\beta}^V(\iota_+\omega_X)=\omega_X\otimes_{\shO_X}F_p{\rm Gr}_V^{-\beta}(\iota_+\shO_X).
\end{equation}

\subsection{Generation level}
Let $(\Mmod,F)$ be a right $\Dmod_X$-module with a good filtration. The filtration $F$ is generated at level $q$ if
$$F_q\Mmod \cdot F_{\ell}\Dmod_X =F_{q+\ell}\Mmod \quad\text{for all}\quad \ell\geq 0,$$
or equivalently
$$F_p\Mmod \cdot F_1\Dmod_X=F_{p+1}\Mmod \quad\text{for all}\quad p\geq q.$$
A similar definition holds for left $\Dmod_X$-modules, as in the introduction.
Note that such $q$ always exists by the definition of a good filtration. Another interpretation is that the filtration is generated at level $q$ if and only if ${\rm Gr}^F_{\bullet} \Mmod$ is generated  in degrees $\leq q$ as a graded module over
$$\mathcal{A}_X:={\rm Gr}^F_{\bullet} \Dmod_X \simeq {\rm Sym}_{\shO_X}^{\bullet}\shT_X,$$
where  $\shT_X$ is the tangent sheaf of $X$.

A generation criterion using the duality functor is given by the following result; see \cite[Lemma~2.5]{Saito_microlocal} and its proof.

\begin{proposition}\label{criterion_via_dual}
If $(\Mmod,F)$ is a filtered right $\Dmod_X$-module underlying a mixed Hodge module such that $F_{-q-1}{\mathbf D}(\Mmod)=0$, then the filtration
on $\Mmod$ is generated at level $q$.
\end{proposition}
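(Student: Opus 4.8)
The plan is to push everything through the associated graded and read off the generation level from a graded vanishing of the dual, using local (Koszul) duality over $A := \Gr^F \Dmod_X \cong \Sym_{\shO_X}\shT_X$. First I would reduce to associated graded: by the reformulation of generation recalled above, $F$ is generated at level $q$ if and only if the coherent graded $A$-module $M := \Gr^F_{\bullet}\Mmod$ is generated in degrees $\le q$, i.e. $(M/A_+M)_p = 0$ for all $p > q$, where $A_+ = \shT_X\cdot A$. Since $A_+$ is generated by the degree-one part $\shT_X$, which is locally a regular sequence cutting out $\shO_X = A/A_+$, we have $M/A_+M = \Tor^A_0(\shO_X, M)$, with the Koszul complex $M\otimes_{\shO_X}\wedge^{\bullet}\shT_X$ computing $\Tor^A_{\bullet}(\shO_X, M)$.

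Next I would identify the graded dual. The filtered duality functor is built from $\mathbf{R}\shHom_{\Dmod_X}(\Mmod,\Dmod_X)$ together with the left/right conversion and the shifts recalled above; passing to $\Gr^F$ turns it into $\mathbf{R}\shHom_A(M, A)$, up to the shift $[n]$, the twist by $\omega_X$, and the internal grading shift dictated by the conventions $F_{p-n}\Mmod^r = \omega_X\otimes F_p\Mmod$ and $(\Mmod,F)(q)$. Because $\Mmod$ underlies a Hodge module, $M$ is Cohen–Macaulay over $A$ of pure dimension $n$ (its support, the characteristic variety, is Lagrangian in $T^*X$); in the mixed case one reduces to the pure subquotients of the weight filtration. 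Hence $\mathbf{R}\shHom_A(M, A)$ is concentrated in cohomological degree $n$, so $\mathbf{D}(\Mmod)$ is again a single filtered module with $M^{\vee} := \Gr^F\mathbf{D}(\Mmod)\cong \shExt^n_A(M, A)$ (suitably twisted), and biduality $\shExt^n_A(M^{\vee}, A)\cong M$ holds.

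Finally I would translate the hypothesis and conclude. Under the identification above, the assumption $F_{-q-1}\mathbf{D}(\Mmod) = 0$ becomes the vanishing of $M^{\vee}$ in internal degrees $\le -q-1$. It then remains to see that this forces $M$ to be generated in degrees $\le q$, which is the local-duality heart of the matter: the self-duality of the Koszul complex on $\shT_X$ exchanges the complex computing $\Tor^A_{\bullet}(\shO_X, M)$ with the one computing $\shExt^{\bullet}_A(M, \shO_X)$, and the latter is obtained from $M^{\vee}=\shExt^n_A(M, A)$ through the Cohen–Macaulay identification of the previous step. A nonzero degree-$p$ generator of $M$, i.e. a nonzero class in $(M/A_+M)_p$, is thereby paired with a nonzero element of $M^{\vee}$ in the complementary internal degree, which the bookkeeping places at $\le -q-1$ exactly when $p > q$. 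The assumed vanishing of $M^{\vee}$ in those degrees therefore kills every generator of $M$ in degree $> q$, proving generation at level $q$.

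The hard part will be the precise bookkeeping in the last two steps: one must pin down every shift and twist (the $[n]$, the $\omega_X$, the Tate twists of type $(n)$ and $(1)$, and the conversion $F_{p-n}\Mmod^r = \omega_X\otimes F_p\Mmod$) so that the threshold in the Koszul pairing lands exactly at the correspondence $-q-1 \leftrightarrow q$. The other essential input is the Cohen–Macaulayness of $\Gr^F\Mmod$ for a Hodge module, which is what collapses $\mathbf{R}\shHom_A(M,A)$ to a single module and makes the duality pairing perfect; everything else is formal once these two points are in place.
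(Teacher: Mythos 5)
Your proposal is correct and takes essentially the same route as the paper, which cites \cite[Lemma~2.5]{Saito_microlocal} for this proposition and then reproduces exactly your machinery---the Koszul complex $K^{\bullet}$ over $\mathcal{A}_X$, Cohen--Macaulayness of ${\rm Gr}^F_{\bullet}\Mmod$ via \cite[Lemme~5.1.13]{Saito-MHP}, and the duality formula ${\rm Gr}_p^F{\mathbf D}\big(\widetilde{\rm DR}(\Mmod,F)\big)\simeq \derR\sheafhom_{\shO_X}\big({\rm Gr}_{-p}^F\widetilde{\rm DR}(\Mmod,F),\omega_X[n]\big)$---in the build-up to Proposition~\ref{refined_criterion}. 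One small correction to your final step: no perfect pairing between individual generators and dual classes is available or needed (indeed $\shExt^n_{\shO_X}(-,\omega_X)$ is not faithful on coherent sheaves), but the one-directional implication your argument actually uses is immediate, since the hypothesis $F_{-q-1}{\mathbf D}(\Mmod)=0$ kills every graded piece ${\rm Gr}^F_m{\mathbf D}(\Mmod)$ with $m\leq -q-1$, hence the whole complex $\big({\rm Gr}^F_{\bullet}{\mathbf D}(\Mmod)\otimes_{\mathcal{A}_X}K^{\bullet}\big)$ in the relevant internal degrees, and therefore, via the duality isomorphism and the spectral sequence, the degree-$p$ part of ${\mathcal H}^0\big({\rm Gr}^F_{\bullet}\Mmod\otimes_{\mathcal{A}_X}K^{\bullet}\big)$ for all $p>q$.
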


We will also need a refinement of this criterion for (essentially) self-dual $(\Mmod, F)$, and for this we formulate more precisely the setup provided by duality. 
The $0$-section of the cotangent bundle corresponds to a surjective morphism $\mathcal{A}_X\to\shO_X$.
 We denote by $K^{\bullet}$ the corresponding Koszul complex 
$$0\to K^{-n}\to \cdots \to K^{-1}\to K^0=\shO_X\to 0$$
placed in degrees $-n,\ldots,0$, where $K^{-i}=\wedge^{i}\shT_X\otimes_{\shO_X}\mathcal{A}_X(i)$.
Note that we use the opposite of the standard convention for degree-shift, namely ${\mathcal P}(i)_m={\mathcal P}_{m-i}$. 
This is a complex of graded free $\mathcal{A}_X$-modules, which gives a free resolution of $\shO_X$ as an $\mathcal{A}_X$-module. 

Suppose now that $(\Mmod,F)$ is a filtered \emph{right} $\Dmod_X$-module that underlies a mixed Hodge module. In this case we have that  ${\rm Gr}_{\bullet}^F \Mmod$ is a Cohen-Macaulay $\mathcal{A}_X$-module by \cite[Lemme~5.1.13]{Saito-MHP} (and, more generally, one can consider 
filtered $\Dmod_X$-modules with this property). 
Recall from \cite[\S2.2]{Saito-MHP} that $\widetilde{\rm DR}(\Mmod,F)$ is the 
filtered differential complex
$$0\to \Mmod\otimes_{\shO_X}\wedge^n\shT_X\to\cdots\to \Mmod\otimes_{\shO_X}\shT_X\to \Mmod\to 0,$$
placed in degrees $-n,\ldots,0$, such that the level $p$ part is given by
$$0\to F_{p-n}\Mmod\otimes_{\shO_X}\wedge^n\shT_X\to\cdots\to F_{p-1}\Mmod\otimes_{\shO_X}\shT_X\to F_p\Mmod\to 0.$$
The maps are \emph{not} $\shO_X$-linear, but by taking the associated graded objects, we obtain complexes of $\shO_X$-modules. More precisely, we have
$${\rm Gr}_p^F\widetilde{\rm DR}(\Mmod,F)\simeq (P\otimes_{\mathcal{A}_X}K^{\bullet})_p,$$
where $P={\rm Gr}^F_{\bullet} \Mmod$. Note that $P\otimes_{\mathcal{A}_X}K^{\bullet}$ represents the object $P\overset{\derL}\otimes_{\mathcal{A}_X}
\shO_X$ in the derived category of graded $\shO_X$-modules. 

An important feature of the duality functor is the following isomorphism in the derived category of filtered differential complexes of $\shO_X$-modules:
$${\mathbf D}\big(\widetilde{\rm DR}(\Mmod,F)\big)\simeq \widetilde{\rm DR}\big({\mathbf D}(\Mmod,F)\big),$$
having the property:
$${\rm Gr}_p^F{\mathbf D}\big(\widetilde{\rm DR}(\Mmod,F)\big)\simeq \derR\sheafhom_{\shO_X}\big({\rm Gr}_{-p}^F\widetilde{\rm DR}(\Mmod,F), 
\omega_X[n]\big)\quad\text{for all}\quad
p\in\ZZ.$$
See \cite[\S2.4]{Saito-MHP}, and also \cite[Remark 2.6]{Saito_microlocal}.

Suppose now that $(\Mmod,F)$ satisfies ${\mathbf D}(\Mmod, F)\simeq (\Mmod, F)(d)$ for some $d\in \ZZ$; this is for instance the case for the nearby and vanishing cycle modules in the previous section.  By combining the above facts, we see that for every $p\in\ZZ$ we have an isomorphism in the derived category of $\shO_X$-modules:
\begin{equation}\label{eq_isom}
(P\otimes_{\mathcal{A}_X}K^{\bullet})_{p-d}\simeq\derR\sheafhom_{\shO_X}\big((P\otimes_{\mathcal{A}_X}K^{\bullet})_{-p},\omega_X\big)[n].
\end{equation}


Denoting $A^{\bullet}:=P\otimes_{\mathcal{A}_X}K^{\bullet}$, using the discussion at the beginning of the section 
we see that the filtration on $\Mmod$ is generated at level $q$ if and only if ${\mathcal H}^0(A^{\bullet})_p=0$
for every $p>q$. The isomorphism (\ref{eq_isom}) gives
$${\mathcal H}^0(A^{\bullet})_p\simeq {\mathscr Ext}_{\shO_X}^n(A^{\bullet}_{-p-d},\omega_X).$$
On the other hand, we have the first-quadrant spectral sequence 
$$E_1^{i,j}={\mathscr Ext}_{\shO_X}^j(A^{-i}_{-p-d},\omega_X) \Rightarrow {\mathscr Ext}_{\shO_X}^{i+j}(A^{\bullet}_{-p-d},\omega_X).$$
Recall also that by definition, we have
$$A^{-i}_{-p-d}={\rm Gr}_{-p-d-i}^F \Mmod \otimes_{\shO_X}\wedge^i\shT_X.$$
Thus for such filtered $\Dmod_X$-modules we obtain the following refinement of the criterion in Proposition \ref{criterion_via_dual}: 

\begin{proposition}\label{refined_criterion}
If $(\Mmod, F)$ underlies a mixed Hodge module and ${\mathbf D}(\Mmod, F)\simeq (\Mmod, F)(d)$, then the filtration on $\Mmod$ is generated at level $q$ if 
$${\mathscr Ext}_{\shO_X}^j({\rm Gr}_{-p-d-n + j}^F \Mmod\otimes_{\shO_X}\wedge^i\shT_X,\omega_X) = 0 \,\,\,\,\,\,{\rm for~all} \,\,\,\,\,\,0 \le j \le n,$$
for every $p > q$.
\end{proposition}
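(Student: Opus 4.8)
The plan is to assemble the structural inputs already put in place above; no new geometric ingredient is needed, because the duality isomorphism and the homological reformulation of generation have been established. Recall that, writing $P = {\rm Gr}^F_{\bullet}\Mmod$ and $A^{\bullet} = P\otimes_{\mathcal{A}_X}K^{\bullet}$, the filtration on $\Mmod$ is generated at level $q$ if and only if ${\mathcal H}^0(A^{\bullet})_p = 0$ for every $p > q$ (the graded Nakayama statement recorded at the start of the subsection). Thus it suffices to prove this vanishing for all $p > q$ under the stated hypothesis.

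First I would invoke the self-duality assumption ${\mathbf D}(\Mmod,F)\simeq(\Mmod,F)(d)$ through the isomorphism (\ref{eq_isom}): replacing $p$ by $p+d$ there and passing to ${\mathcal H}^0$ identifies ${\mathcal H}^0(A^{\bullet})_p$ with the hyper-Ext sheaf ${\mathscr Ext}^n_{\shO_X}(A^{\bullet}_{-p-d},\omega_X)$, exactly as already noted. The role of this step is to convert a statement about generation (a right-exactness phenomenon for $P\otimes_{\mathcal{A}_X}\shO_X$) into a top-degree ${\mathscr Ext}$ vanishing, which the spectral sequence can then control.

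Next I would analyze the complex $A^{\bullet}_{-p-d}$, concentrated in cohomological degrees $-n,\ldots,0$, via the first-quadrant hyper-Ext spectral sequence $E_1^{i,j}={\mathscr Ext}^j_{\shO_X}(A^{-i}_{-p-d},\omega_X)\Rightarrow{\mathscr Ext}^{i+j}_{\shO_X}(A^{\bullet}_{-p-d},\omega_X)$. Since this spectral sequence lies in the first quadrant with $0\le i\le n$, the abutment in total degree $n$ is assembled only from the terms $E_\infty^{i,\,n-i}$ for $0\le i\le n$, each of which is a subquotient of $E_1^{i,\,n-i}={\mathscr Ext}^{n-i}_{\shO_X}(A^{-i}_{-p-d},\omega_X)$. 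Substituting the identification $A^{-i}_{-p-d}={\rm Gr}^F_{-p-d-i}\Mmod\otimes_{\shO_X}\wedge^i\shT_X$ and setting $j=n-i$ (so that $-p-d-i=-p-d-n+j$), this $E_1$ term is precisely ${\mathscr Ext}^j_{\shO_X}({\rm Gr}^F_{-p-d-n+j}\Mmod\otimes_{\shO_X}\wedge^i\shT_X,\omega_X)$, which vanishes for $0\le j\le n$ by hypothesis. Hence $E_\infty^{i,\,n-i}=0$ for all $i$, so ${\mathscr Ext}^n_{\shO_X}(A^{\bullet}_{-p-d},\omega_X)=0$, and therefore ${\mathcal H}^0(A^{\bullet})_p=0$ for every $p>q$, which is the desired conclusion.

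The only point requiring care is the bookkeeping of grading shifts. One must track the convention ${\mathcal P}(i)_m={\mathcal P}_{m-i}$ through the definition of $K^{\bullet}$ and through (\ref{eq_isom}), so that the ${\rm Gr}^F$ index in $A^{-i}_{-p-d}$ comes out to be exactly $-p-d-i$, and one must confirm that $A^{\bullet}_{-p-d}$ is genuinely supported in degrees $-n,\ldots,0$, so that the spectral sequence is first-quadrant and only the antidiagonal $i+j=n$ contributes to the degree-$n$ abutment. Once these shifts are verified, the argument is a direct reading-off of the hypothesis against the $E_1$-page, and I do not expect any remaining obstacle.
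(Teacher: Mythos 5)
Your argument is correct and is essentially identical to the paper's own proof: the paper likewise reduces generation at level $q$ to ${\mathcal H}^0(A^{\bullet})_p=0$ for $p>q$, converts this via the self-duality isomorphism (\ref{eq_isom}) into the vanishing of ${\mathscr Ext}_{\shO_X}^n(A^{\bullet}_{-p-d},\omega_X)$, and reads off the conclusion from the first-quadrant spectral sequence $E_1^{i,j}={\mathscr Ext}_{\shO_X}^j(A^{-i}_{-p-d},\omega_X)$ together with the identification $A^{-i}_{-p-d}={\rm Gr}^F_{-p-d-i}\Mmod\otimes_{\shO_X}\wedge^i\shT_X$, exactly as you do with the substitution $j=n-i$. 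Your grading bookkeeping, including the convention ${\mathcal P}(i)_m={\mathcal P}_{m-i}$ and the placement of $A^{\bullet}$ in degrees $-n,\ldots,0$, matches the paper's.
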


\section{Main results}

We continue to work on a smooth complex variety $X$, endowed with a nonzero regular function $f\in \shO_X (X)$.
We use the notation of the previous section.

\subsection{Generation level for ${\rm Gr}^\alpha_V(\iota_+\shO_X)$}
We start by proving the key Theorem \ref{NVC_generation}; this is
 split here into Propositions~\ref{crit1}, \ref{crit0} and \ref{crit2}, the last being the most involved.
We begin with a generation bound for ${\rm Gr}_V^{\alpha}(\iota_+\shO_X)$ with $\alpha\in (0,1)$. This case only needs the criterion in Proposition~\ref{criterion_via_dual}.

\begin{proposition}\label{crit1}
For $\alpha\in (0,1)$ and $q\geq 1$, the Hodge filtration on ${\rm Gr}_V^{\alpha}(\iota_+\shO_X)$ 
is generated at level $q$ if $F_{n-q}{\rm Gr}_V^{1-\alpha}(\iota_+\shO_X)=0$.
In particular, if $f$ defines a singular hypersurface, then the Hodge filtration on 
${\rm Gr}_V^{\alpha}(\iota_+\shO_X)$ is generated at level $n-\lceil\widetilde{\alpha}_f+\alpha\rceil+1$. 
\end{proposition}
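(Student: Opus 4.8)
The plan is to deduce the stated criterion from the duality bound of Proposition~\ref{criterion_via_dual}, applied not to ${\rm Gr}_V^{\alpha}(\iota_+\shO_X)$ itself but to its right-module incarnation ${\rm Gr}_{-\alpha}^V(\iota_+\omega_X)$, and then to verify the resulting vanishing hypothesis in the special case by a short computation with Lemma~\ref{lem_char_min_exponent}.

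First I would write $\Mmod={\rm Gr}_V^{\alpha}(\iota_+\shO_X)$ and $\Nmod={\rm Gr}_{-\alpha}^V(\iota_+\omega_X)$; for $\alpha\in(0,1)$ we have $\beta:=-\alpha\in(-1,0)$, so $\Nmod$ is a right $\Dmod_X$-module underlying a nearby-cycle Hodge module and Proposition~\ref{criterion_via_dual} applies to it. By (\ref{eq_comp_V_filtrations}) with $\beta=-\alpha$ we have $F_{p-n-1}\Nmod=\omega_X\otimes F_p\Mmod$, so passing to associated graded modules over $\mathcal{A}_X$ shows that ``$\Mmod$ is generated at level $q$'' is equivalent to ``$\Nmod$ is generated at level $q-n-1$''. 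Proposition~\ref{criterion_via_dual} then reduces the latter to the single vanishing $F_{n-q}{\mathbf D}(\Nmod)=0$.

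To evaluate this I would compute ${\mathbf D}(\Nmod)$ using (\ref{eq3_dual_nearby_cycles}) with $\beta=-\alpha$, which gives ${\mathbf D}(\Nmod)\simeq{\rm Gr}_{\alpha-1}^V(\iota_+\omega_X)(1+n)$. Absorbing the twist and applying (\ref{eq_comp_V_filtrations}) a second time, now with $\beta=\alpha-1$ so that $-\beta=1-\alpha$, the shifts cancel and one obtains $F_{n-q}{\mathbf D}(\Nmod)\simeq\omega_X\otimes F_{n-q}{\rm Gr}_V^{1-\alpha}(\iota_+\shO_X)$. Thus the criterion holds exactly when $F_{n-q}{\rm Gr}_V^{1-\alpha}(\iota_+\shO_X)=0$, which is the hypothesis of the proposition. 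I expect the only delicate part of this half to be the bookkeeping of the shifts --- two applications of (\ref{eq_comp_V_filtrations}) and the Tate twist $(1+n)$ coming from duality --- which must combine to produce precisely the index $n-q$ rather than a neighbouring value; the argument is otherwise a direct concatenation of the cited isomorphisms.

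For the concrete bound I would take $q=n-\lceil\widetilde{\alpha}_f+\alpha\rceil+1$, so that, setting $\lambda=1-\alpha\in(0,1)$, we have $n-q=\lceil\widetilde{\alpha}_f+\alpha\rceil-1=\lceil\widetilde{\alpha}_f-\lambda\rceil$, and verify $F_{n-q}{\rm Gr}_V^{\lambda}(\iota_+\shO_X)=0$ directly. By (\ref{Hodge_O}) we have $F_{n-q}(\iota_+\shO_X)=\sum_{i=0}^{(n-q)-1}\shO_X\partial_t^i\delta$, and since $V^{>\lambda}\iota_+\shO_X$ is an $\shO_X$-submodule it suffices to show that each generator $\partial_t^i\delta$ with $i\le(n-q)-1$ already lies in $V^{>\lambda}$; this forces $F_{n-q}(\iota_+\shO_X)\subseteq V^{>\lambda}$ and hence $F_{n-q}{\rm Gr}_V^{\lambda}=0$. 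Here $i\le\lceil\widetilde{\alpha}_f-\lambda\rceil-1<\widetilde{\alpha}_f-\lambda$, using $\lceil y\rceil-1<y$ for every real $y$, so $\widetilde{\alpha}_f>i+\lambda$; choosing any $\mu\in\big(\lambda,\min\{1,\widetilde{\alpha}_f-i\}\big]$ we get $\widetilde{\alpha}_f\ge i+\mu$ with $\mu\in(0,1]$, and Lemma~\ref{lem_char_min_exponent} gives $\partial_t^i\delta\in V^{\mu}\subseteq V^{>\lambda}$, as required. (When $n-q\le 0$ the filtered piece is already zero.) This step is short, and is the only place where the minimal exponent enters the argument.
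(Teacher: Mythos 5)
Your argument is correct and follows essentially the same route as the paper's proof: the first assertion via Proposition~\ref{criterion_via_dual} applied to ${\rm Gr}^V_{-\alpha}(\iota_+\omega_X)$, with the same two applications of (\ref{eq_comp_V_filtrations}) and the twist from (\ref{eq3_dual_nearby_cycles}) combining to the index $n-q$, and the second assertion via Lemma~\ref{lem_char_min_exponent}. The only cosmetic difference is that you verify $\partial_t^i\delta\in V^{>1-\alpha}$ generator by generator with an explicit $\mu$, whereas the paper chooses a single $\beta\in(1-\alpha,1)$ with $\partial_t^{n-q-1}\delta\in V^{\beta}$ and invokes monotonicity in $i$; the content is identical.
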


\begin{proof}
It follows from (\ref{eq_comp_V_filtrations}) that the filtration on ${\rm Gr}_V^{\alpha}(\iota_+\shO_X)$ is generated at level $q$
if and only if the filtration on ${\rm Gr}^V_{-\alpha}(\iota_+\omega_X)$ is generated at level $q-n-1$.
Using the isomorphism (\ref{eq3_dual_nearby_cycles}), we deduce in turn from 
Proposition~\ref{criterion_via_dual} that this is the case if
$$F_{n-q}{\rm Gr}^V_{\alpha-1}(\iota_+\omega_X)(n+1)=F_{-q-1}{\rm Gr}^V_{\alpha-1}(\iota_+\omega_X)$$
is $0$. The latter condition is equivalent with $F_{n-q}{\rm Gr}_V^{1-\alpha}(\iota_+\shO_X)=0$ by another application of  (\ref{eq_comp_V_filtrations}),
giving the first assertion in the proposition.

For the second assertion, note that by Lemma~\ref{lem_char_min_exponent}, for every $j\geq 0$ 
and every $\beta\in (0,1)$ we have the equivalence
$$\partial_t^j\delta\in V^{\beta} \iff \widetilde{\alpha}_f\geq j+\beta.$$ 
In particular, if this holds for $j\geq 1$, it also holds for $j-1$. 
If $q=n-\lceil\widetilde{\alpha}_f+\alpha\rceil+1$, then $q> n-\widetilde{\alpha}_f-\alpha$,
and we conclude that there is
$\beta$ with $1-\alpha<\beta<1$, such that $\partial_t^{n-q-1}\delta\in V^{\beta}\iota_+\shO_X$.
In this case we have 
$F_{n-q}V^{\beta}\iota_+\shO_X=F_{n-q}\iota_+\shO_X$, hence clearly $F_{n-q}{\rm Gr}_V^{1-\alpha}(\iota_+\shO_X)=0$. 
\end{proof}

A similar proof works for $\alpha =0$; we include it for completeness, even though this is not relevant for 
the rest of the paper.

\begin{proposition}\label{crit0}
If $F_{n-q+1}{\rm Gr}_V^0(\iota_+\shO_X)=0$ for some $q\geq 1$, then
the Hodge filtration on ${\rm Gr}_V^0(\iota_+\shO_X)$ is generated at level $q$.
In particular, if $f $defines a singular hypersurface, then the Hodge filtration on ${\rm Gr}_V^0(\iota_+\shO_X)$
is generated at level $n-\lceil\widetilde{\alpha}_f\rceil+1$. 
\end{proposition}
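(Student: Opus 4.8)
The plan is to follow the pattern of the proof of Proposition~\ref{crit1}, but with the nearby-cycle duality isomorphism (\ref{eq3_dual_nearby_cycles}) replaced by its vanishing-cycle analogue (\ref{eq_vanishing_cycles}). The only structural difference between the two cases is that the Tate twist in (\ref{eq_vanishing_cycles}) is by $n$ rather than by $n+1$; this single discrepancy is exactly what shifts the required hypothesis from $F_{n-q}$ (as in Proposition~\ref{crit1}) to $F_{n-q+1}$ here, and keeping track of it correctly through the left/right passage is the one point I would treat with care.

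First I would transfer the question to the associated right $\Dmod_X$-module. By the comparison (\ref{eq_comp_V_filtrations}) with $\beta=0$, the filtration on the left module ${\rm Gr}_V^0(\iota_+\shO_X)$ is generated at level $q$ if and only if the filtration on ${\rm Gr}_0^V(\iota_+\omega_X)$ is generated at level $q-n-1$. To the latter I would apply the duality criterion of Proposition~\ref{criterion_via_dual}: generation at level $q-n-1$ holds provided $F_{-(q-n-1)-1}{\mathbf D}\big({\rm Gr}_0^V(\iota_+\omega_X)\big)=F_{n-q}{\mathbf D}\big({\rm Gr}_0^V(\iota_+\omega_X)\big)=0$.

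Next I would feed in the self-duality isomorphism (\ref{eq_vanishing_cycles}), namely ${\mathbf D}{\rm Gr}_0^V(\iota_+\omega_X)\simeq{\rm Gr}_0^V(\iota_+\omega_X)(n)$. With the convention $F_p\big(\Mmod(n)\big)=F_{p-n}\Mmod$, the required vanishing becomes $F_{-q}{\rm Gr}_0^V(\iota_+\omega_X)=0$; applying (\ref{eq_comp_V_filtrations}) once more, this time with $p-n-1=-q$, identifies this with $F_{n-q+1}{\rm Gr}_V^0(\iota_+\shO_X)=0$, which is precisely the hypothesis. This settles the first assertion. (Note how the twist by $n$, rather than $n+1$, is what produces the $+1$ in the index $n-q+1$.)

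For the explicit bound I would take $q=n-\lceil\widetilde{\alpha}_f\rceil+1$, so that $n-q+1=\lceil\widetilde{\alpha}_f\rceil=:k$, and verify the hypothesis $F_k{\rm Gr}_V^0(\iota_+\shO_X)=0$ directly. By (\ref{Hodge_O}) we have $F_k\iota_+\shO_X=\sum_{i=0}^{k-1}\shO_X\partial_t^i\delta$, so it suffices to show each generator lies in $V^{>0}\iota_+\shO_X$. Since $f$ is singular, $\widetilde{\alpha}_f$ is finite and $k-1<\widetilde{\alpha}_f\le k$, whence $\alpha:=\widetilde{\alpha}_f-(k-1)\in(0,1]$; Lemma~\ref{lem_char_min_exponent} then gives $\partial_t^{k-1}\delta\in V^\alpha\iota_+\shO_X\subseteq V^{>0}\iota_+\shO_X$, and for $i<k-1$ one has $\widetilde{\alpha}_f\ge i+\alpha$, so the lower powers lie in $V^\alpha$ as well. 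Hence $F_k\iota_+\shO_X\subseteq V^{>0}\iota_+\shO_X$, which forces $F_k{\rm Gr}_V^0(\iota_+\shO_X)=0$ and completes the argument.
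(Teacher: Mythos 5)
Your proposal is correct and takes essentially the same route as the paper: the paper likewise transfers to the right module, applies Proposition~\ref{criterion_via_dual} with the self-duality (\ref{eq_vanishing_cycles}) (whose twist by $n$ rather than $n+1$ is exactly what yields $F_{n-q+1}$), and then verifies the hypothesis for $q=n-\lceil\widetilde{\alpha}_f\rceil+1$ by producing $\beta>0$ with $\partial_t^{\lceil\widetilde{\alpha}_f\rceil-1}\delta\in V^{\beta}$ via Lemma~\ref{lem_char_min_exponent}, just as you do. Your write-up merely makes explicit the index bookkeeping that the paper compresses into ``arguing as above.''
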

\begin{proof}
Arguing as above, using (\ref{eq_vanishing_cycles}) and Proposition~\ref{criterion_via_dual} we see that the Hodge filtration on ${\rm Gr}^0_V(\iota_+\shO_X)$
 is generated at level
$q$ if $F_{n-q+1}{\rm Gr}_V^0(\iota_+\shO_X)=0$. This in turn holds if $q=n-\lceil\widetilde{\alpha}_f\rceil+1$, since
Lemma \ref{lem_char_min_exponent} implies that there exists $\beta > 0$ such that $\partial_t^{\lceil \widetilde{\alpha}_f\rceil -1}\delta\in V^{\beta}$.
\end{proof}

\medskip

For ${\rm Gr}_V^1(\iota_+\shO_X)$ we need to use a more refined argument. We start by specializing the criterion in Proposition \ref{refined_criterion} to the 
$\Dmod_{X\times \CC}$-module 
$\Mmod={\rm Gr}^V_{-1}(\iota_+\omega_X)$, in which case we have $d=n+1$ by (\ref{eq2_dual_nearby_cycles}), so that
the vanishing in the proposition concerns
$${\mathscr Ext}_{\shO_X}^j\big({\rm Gr}_{j-p-2n-1}^F{\rm Gr}^V_{-1}(\iota_+\omega_X)\otimes_{\shO_X} \wedge^i\shT_X ,\omega_X\big)$$ 
$$\simeq {\mathscr Ext}_{\shO_X}^j\big({\rm Gr}_{j-p-n}^F{\rm Gr}_V^1(\iota_+\shO_X),\shO_X)\otimes_{\shO_X} \Omega_X^i.$$
Furthermore, the filtration on ${\rm Gr}_V^1(\iota_+\shO_X)$ is generated at level $q$ if and only if the filtration on 
${\rm Gr}^V_{-1}(\iota_+\omega_X)$ is generated at level $q-n-1$. We thus obtain

\begin{corollary}\label{Ext_criterion}
The Hodge filtration on ${\rm Gr}_V^1(\iota_+\shO_X)$ is generated at level $q$ if 
$${\mathscr Ext}_{\shO_X}^j\big({\rm Gr}_{j - p}^F {\rm Gr}_V^1(\iota_+\shO_X), \shO_X\big) = 0 
\,\,\,\,\,\,{\rm for ~all} \,\,\,\, 0\leq j\leq n \,\,\,\,{\rm and} \,\,\,\,p>q-1.$$
\end{corollary}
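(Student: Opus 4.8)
The plan is to specialize the general duality criterion of Proposition~\ref{refined_criterion} to the right $\Dmod_{X\times\CC}$-module $\Mmod = {\rm Gr}^V_{-1}(\iota_+\omega_X)$, which underlies the weight-$1$ (multiplicity) piece of the nearby cycles of $(\iota_+\omega_X,F)$, and then translate the resulting vanishing conditions back into statements about the left module ${\rm Gr}_V^1(\iota_+\shO_X)$ on $X$. The first step is to identify the self-duality shift: by the isomorphism (\ref{eq2_dual_nearby_cycles}), we have ${\mathbf D}\,{\rm Gr}^V_{-1}(\iota_+\omega_X)\simeq {\rm Gr}^V_{-1}(\iota_+\omega_X)(1+n)$, so Proposition~\ref{refined_criterion} applies with $d = n+1$.

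Next I would substitute $d = n+1$ into the $\shExt$-vanishing condition of Proposition~\ref{refined_criterion}. That proposition asks for the vanishing of
$${\mathscr Ext}_{\shO_X}^j\big({\rm Gr}_{-p-d-n+j}^F\Mmod\otimes_{\shO_X}\wedge^i\shT_X,\omega_X\big),$$
and with $d=n+1$ the internal grading index becomes $j-p-2n-1$. The key computational step is then to rewrite the graded piece of $\Mmod={\rm Gr}^V_{-1}(\iota_+\omega_X)$ in terms of the left module on $X$ using the filtration-comparison formula (\ref{eq_comp_V_filtrations}), namely $F_{p'-n-1}{\rm Gr}^V_{\beta}(\iota_+\omega_X)=\omega_X\otimes_{\shO_X}F_{p'}{\rm Gr}_V^{-\beta}(\iota_+\shO_X)$ with $\beta=-1$; this shifts the grading index by $n+1$ and tensors with $\omega_X$. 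Because we are looking at the associated graded, the tensor factor $\wedge^i\shT_X$ and the $\omega_X$ coming from passing to the right module can be absorbed via the standard adjunction $\shExt^j_{\shO_X}(\,\cdot\,\otimes\wedge^i\shT_X\otimes\omega_X,\omega_X)\simeq \shExt^j_{\shO_X}(\,\cdot\,,\shO_X)\otimes\Omega_X^i$, which accounts for the passage from $\omega_X$ to $\shO_X$ and the appearance of $\Omega_X^i$ displayed in the excerpt. After this bookkeeping the grading index simplifies to $j-p-n$ on the left module, and since the $\shExt$-sheaf is evaluated into $\shO_X$, the factor $\Omega_X^i$ is a locally free twist and so does not affect vanishing; the index $i$ therefore drops out of the stated condition.

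The final step is to reconcile the level shift. By (\ref{eq_comp_V_filtrations}) the filtration on ${\rm Gr}_V^1(\iota_+\shO_X)$ is generated at level $q$ if and only if the filtration on ${\rm Gr}^V_{-1}(\iota_+\omega_X)$ is generated at level $q-n-1$, exactly as in the proofs of Propositions~\ref{crit1} and \ref{crit0}. Feeding generation level $q-n-1$ for $\Mmod$ into Proposition~\ref{refined_criterion} means requiring the vanishing for every $p>q-n-1$; after the index substitution above, reindexing this range (absorbing the $-n-1$ shift into the internal grading $j-p-n$) yields precisely the condition that ${\mathscr Ext}_{\shO_X}^j\big({\rm Gr}_{j-p}^F{\rm Gr}_V^1(\iota_+\shO_X),\shO_X\big)=0$ for all $0\le j\le n$ and all $p>q-1$, which is the assertion of the corollary.

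I expect the main obstacle to be purely the careful tracking of the several grading conventions and shifts that are stacked here: the degree-shift convention ${\mathcal P}(i)_m={\mathcal P}_{m-i}$, the left-to-right shift by $n$ in the Hodge filtration, the self-duality shift $d=n+1$, and the upper-versus-lower $V$-filtration sign flip $V_\alpha\Mmod^r \leftrightarrow V^{-\alpha}\Mmod$. None of the individual manipulations is deep, but getting all the indices to cancel correctly so that the final range comes out as $p>q-1$ with the clean internal index $j-p$ is where an error would most easily creep in; the rest follows formally from Proposition~\ref{refined_criterion} and the adjunction, and does not require any new geometric input.
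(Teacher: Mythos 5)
Your proposal is correct and follows essentially the same route as the paper: specializing Proposition~\ref{refined_criterion} with $d=n+1$ via (\ref{eq2_dual_nearby_cycles}), translating the graded pieces through (\ref{eq_comp_V_filtrations}), absorbing $\omega_X$ and $\wedge^i\shT_X$ into a locally free twist by $\Omega_X^i$ that is irrelevant for vanishing, and matching the generation levels $q \leftrightarrow q-n-1$, so that the range reindexes to $p>q-1$ with internal index $j-p$, exactly as in the text. The only blemishes are cosmetic: the eigenvalue-$1$ part ${\rm Gr}^V_{-1}$ is the unipotent-monodromy piece of the nearby cycles rather than a ``weight-$1$'' piece, and the final reindexing is by $n$ (not $n+1$), but your stated conclusion is the correct one.
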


To apply this criterion, we need a better understanding of the terms ${\rm Gr}^F_k{\rm Gr}^1_V(\iota_+\shO_X)$.
To this end, for every $k\geq 0$ we introduce the following coherent ideals of $\shO_X$:
$$J_k=\{h\in\shO_X\mid h\partial_t^k\delta\in V^1\iota_+\shO_X\}\quad\text{and}\quad
J'_k=\{h\in \shO_X\mid h\partial_t^k\delta\in V^{>1}\iota_+\shO_X\}.$$
From now on, we will only deal with the $V$-filtration on $\iota_+\shO_X$, hence in order to simplify the notation
we often denote $V^{\alpha}=V^{\alpha}\iota_+\shO_X$ and ${\rm Gr}^{\alpha}_V={\rm Gr}^{\alpha}_V(\iota_+\shO_X)$.

We will make use of the fact that $J'_k\subseteq (f)$ for all $k\geq 0$. 
In fact, we prove the following more precise result:

\begin{lemma}\label{ideals_J'}
If $f$ defines a reduced hypersurface, then for every $k\geq 0$, we have $J'_k=(f^{k+1})$.
\end{lemma}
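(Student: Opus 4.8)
The plan is to prove the two inclusions $(f^{k+1})\subseteq J'_k$ and $J'_k\subseteq(f^{k+1})$ separately, the essential tool being the explicit description of multiplication by $t$ on $\iota_+\shO_X\cong\bigoplus_{i\ge0}\shO_X\partial_t^i\delta$ recorded in \S\ref{scn:general}. Write $e_k:=\partial_t^k\delta$. Setting $v=1$, $i=k$ in the formula $t(v\partial_t^i\delta)=fv\partial_t^i\delta-iv\partial_t^{i-1}\delta$ gives the single relation I will use throughout,
\begin{equation*}
f e_k = t e_k + k\,e_{k-1},\qquad k\ge 0. \tag{$\ast$}
\end{equation*}
I will also use standard properties of the rational $V$-filtration (see \cite[\S2]{MP3}): the inclusions $tV^\alpha\subseteq V^{\alpha+1}$ and $\partial_t V^\alpha\subseteq V^{\alpha-1}$ for all $\alpha$, together with the bijectivity $tV^\alpha=V^{\alpha+1}$ for $\alpha>0$; and the fact that $\delta\in V^{>0}$, which follows from Lemma~\ref{lem_char_min_exponent} with $p=0$ since $\widetilde{\alpha}_f>0$.

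For the inclusion $J'_k\subseteq(f^{k+1})$ I would argue as follows. Bijectivity of $t$ on positive pieces gives $V^{>1}=tV^{>0}\subseteq t\cdot\iota_+\shO_X$, so if $h e_k\in V^{>1}$ then $h e_k=tw$ for some $w=\sum_{i\ge0}w_i\partial_t^i\delta\in\iota_+\shO_X$. Applying $(\ast)$ termwise yields $tw=\sum_{i\ge0}\bigl(f w_i-(i+1)w_{i+1}\bigr)\partial_t^i\delta$, so comparing coefficients with $h e_k$ gives $f w_i=(i+1)w_{i+1}$ for $i\ne k$ and $f w_k-(k+1)w_{k+1}=h$. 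Since $w$ has finite support and $\shO_X$ is a domain, descending through the relations for $i>k$ forces $w_i=0$ for all $i>k$; in particular $w_{k+1}=0$, so $h=f w_k$. Ascending through the relations for $i<k$ gives $w_j=f^j w_0/j!$ for $0\le j\le k$, whence $h=f w_k=f^{k+1}w_0/k!\in(f^{k+1})$, as $k!$ is invertible.

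For the reverse inclusion $(f^{k+1})\subseteq J'_k$ it suffices to show $f^{k+1}e_k\in V^{>1}$, and I would prove the sharper auxiliary estimate that $f^{j}e_k\in V^{>\,j-k}$ for all integers $j,k\ge0$, by induction on $k$ with an inner induction on $j$. The base $k=0$ reduces to $f^j\delta=t^j\delta\in V^{>j}$, using $t\delta=f\delta$ and $\delta\in V^{>0}$. In the inductive step the case $j=0$ is immediate from $e_k=\partial_t^k\delta\in V^{\gamma-k}\subseteq V^{>-k}$, where $\gamma=\min(\widetilde{\alpha}_f,1)>0$; and for $j\ge1$ I multiply $(\ast)$ by $f^{j-1}$ to get $f^j e_k=t(f^{j-1}e_k)+k\,f^{j-1}e_{k-1}$, whose first term lies in $V^{>j-k}$ by the inner hypothesis (namely $f^{j-1}e_k\in V^{>(j-1)-k}$, followed by $t$) and whose second term lies in $V^{>j-k}$ by the outer hypothesis applied to $k-1$. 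Taking $j=k+1$ gives $f^{k+1}e_k\in V^{>1}$, and since $V^{>1}$ is an $\shO_X$-submodule this yields $(f^{k+1})\subseteq J'_k$.

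The coefficient recursion and the double induction are routine; the one genuinely external input is the bijectivity $tV^\alpha=V^{\alpha+1}$ for $\alpha>0$, needed to realize every element of $V^{>1}$ as $t$ times an element of $\iota_+\shO_X$. I expect the reverse step to be the point requiring the most care: one must be sure that it is membership in $V^{>1}$, and not merely in $t\cdot\iota_+\shO_X$, that is available. As a consistency check, the same auxiliary estimate (at exponent equal to $k$) shows that the element $w$ solving $f^{k+1}e_k=tw$ has components $f^j e_j/j!\in V^{>0}$, so $w\in V^{>0}$, reconciling the two halves. Reducedness of $D$ is used only to guarantee $\widetilde{\alpha}_f>0$ (equivalently $\delta\in V^{>0}$); it is otherwise retained merely as the standing hypothesis of the section.
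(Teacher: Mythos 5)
Your proposal is correct, and it reaches the lemma by a genuinely different route than the paper on both inclusions. For $(f^{k+1})\subseteq J'_k$, the paper argues in one line from multiplier ideals: $\I(f^{k+1})=(f^{k+1})$ together with the description (\ref{multiplier}) of the $V$-filtration gives $f^{k+1}\delta\in V^{>k+1}$, and applying $\partial_t^k$ yields $f^{k+1}\partial_t^k\delta\in V^{>1}$; your double induction on the estimate $f^j\partial_t^k\delta\in V^{>j-k}$ replaces this external input by the elementary shift properties $tV^{\alpha}\subseteq V^{\alpha+1}$, $\partial_tV^{\alpha}\subseteq V^{\alpha-1}$ and $\delta\in V^{>0}$. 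For the reverse inclusion $J'_k\subseteq(f^{k+1})$, the paper reduces to the case where $f$ is smooth --- this is exactly where reducedness enters, via an open subset with complement of codimension at least two and extension of $h/f^{k+1}$ across it --- and then uses the explicit model of the $V$-filtration along a smooth hypersurface ($V^m$ generated over $\Dmod_X$ by $x_1^{m-1}$); you instead stay global, using the surjectivity of $t\colon V^{\alpha}\to V^{\alpha+1}$ for $\alpha>0$ (recorded in the paper in the proof of Lemma~\ref{lem1}, so legitimately available) to write $h\partial_t^k\delta=tw$ and running a coefficient recursion. Your relations $fw_i=(i+1)w_{i+1}$ for $i\neq k$ and $h=fw_k$ are precisely the global avatar of the paper's smooth-case relations $x_1g_i+(i+1)g_{i+1}=0$ and $h=(-1)^kx_1g_k$: you trade the explicit generators $\partial_{x_1}^ix_1\delta$ of $V^2$ for the surjectivity of $t$. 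What your route buys: it is self-contained (no multiplier ideals, no smooth-case description of the $V$-filtration, no codimension-two reduction), and it in fact never uses reducedness at all --- note that $\widetilde{\alpha}_f>0$, hence $\delta\in V^{>0}$, holds for every nonzero $f$ with ${\rm div}(f)\neq 0$, as stated in \parref{scn:general}, so even the residual role you ascribe to reducedness is vacuous, and your argument proves $J'_k=(f^{k+1})$ for arbitrary nonzero $f$. What the paper's route buys is economy where its machinery is already deployed ((\ref{multiplier}) is quoted anyway) and a reduction pattern that extends to simple normal crossing divisors via \cite[Theorem~3.4]{Saito-MHM}. The only blemishes in your write-up are cosmetic: in the consistency check the components of $w$ should read $w_j=f^jw_0/j!$ rather than $f^je_j/j!$, and invoking Lemma~\ref{lem_char_min_exponent} for $\delta\in V^{>0}$ is more than is needed.
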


\begin{proof}
It is well known that ${\mathcal I}(f^{k+1})=(f^{k+1})$, and so by ($\ref{multiplier}$) 
it follows that $f^{k+1}\delta\in V^{>(k+1)}$.
We thus have $f^{k+1}\partial_t^k\delta\in V^{>1}$, hence $f^{k+1}\in J'_k$.

It suffices to prove the reverse inclusion $J'_k\subseteq (f^{k+1})$ on 
an open subset $U$ of $X$ such that ${\rm codim}_X(X\smallsetminus U)\geq 2$. Since $f$
defines a reduced hypersurface, we can find such a subset $U$ on which $f$ is smooth. 
We will therefore assume from now on that ${\rm div}(f)$ is smooth. After passing to a suitable
open cover of $X$, we may further assume that we have an algebraic system of coordinates 
$x_1,\ldots,x_n$ such that $f=x_1$. 

Recall that in this case the $V$-filtration on $\iota_+ \shO_X$ only jumps at integers (hence $V^{>1}=V^2$) and for every $m\geq 1$,
$V^m$ is generated over $\Dmod_X$ by $x_1^{m-1}$. This follows easily by checking that this definition
satisfies the defining properties of the $V$-filtration. (For a more general statement valid for arbitrary simple normal crossing divisors, see \cite[Theorem~3.4]{Saito-MHM}.) In particular, we see that $V^2$ is generated as an $\shO_X$-module
by $\partial_{x_1}^ix_1\delta$, for $i\geq 0$. Since $\partial_{x_1}^i\delta=(-1)^i\partial_t^i\delta$, we have
$$\partial_{x_1}^ix_1\delta=x_1\partial_{x_1}^i\delta+[\partial_{x_1}^i,x_1]\delta=(-1)^ix_1\partial_t^i\delta+(-1)^{i-1}i\partial_t^{i-1}\delta.$$
We conclude that given a regular function $h$, we have $h\partial_t^k\delta\in V^2$ if and only if there are
regular functions $g_0,\ldots,g_p$ such that
$$h\partial_t^k\delta=\sum_{i=0}^pg_i\partial_{x_1}^ix_1\delta=g_0x_1\delta+\sum_{i=1}^p(-1)^ig_i(x_1\partial_t^i\delta-i\partial_t^{i-1}\delta).$$
This equality holds if and only if $g_i=0$ for $i>k$, $h=(-1)^kx_1g_k$, and 
$$x_1g_i+(i+1)g_{i+1}=0\quad\text{for}\quad 0\leq i\leq k-1.$$
This clearly implies that $h\in (x_1^{k+1})$, completing the proof of the lemma.
\end{proof}

We are now able to establish the connection between the Hodge filtration  on ${\rm Gr}^1_V$ and the minimal exponent $\widetilde{\alpha}_f$.

\begin{proposition}\label{filtration_nearby}
If $f$ defines a reduced hypersurface and $p \ge 0$ is an integer such that $\widetilde{\alpha}_f > p$, then 
$${\rm Gr}_{p+1}^F{\rm Gr}_V^1(\iota_+\shO_X) \simeq J_p/(f) \,\,\,\,\,\,  {\rm and} \,\,\,\,\,\,{\rm Gr}_{i+1}^F{\rm Gr}_V^1(\iota_+\shO_X) \simeq \shO_X/(f)\,\,\,\,{\rm for}\,\,\,\,0 \le i\le p-1$$
${\rm (}$note that the second statement is vacuous for $p = 0$${\rm )}$.
\end{proposition}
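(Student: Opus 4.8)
The plan is to compute the induced Hodge filtration on ${\rm Gr}_V^1$ by extracting, for each $k$, the coefficient of the top generator $\partial_t^k\delta$. Write $M=\iota_+\shO_X$ and recall $F_{k+1}M=\sum_{i=0}^k\shO_X\partial_t^i\delta$, so ${\rm Gr}_{k+1}^F M\cong\shO_X$ via $h\partial_t^k\delta\mapsto h$. By \lemmaref{lem_char_min_exponent} and the hypothesis $\widetilde{\alpha}_f>p$, one has $\partial_t^j\delta\in V^1$ for all $j\le p-1$, hence $F_kM\subseteq V^1$ for every $k\le p$. Using this I would show that the $\shO_X$-linear ``leading coefficient'' map
$$\bar\sigma_k\colon J_k\longrightarrow {\rm Gr}_{k+1}^F{\rm Gr}_V^1,\qquad h\longmapsto \big[\,h\partial_t^k\delta\,\big]$$
is surjective with kernel exactly $K_k:=\{h\in\shO_X\mid h\partial_t^k\delta\in F_kM+V^{>1}\}$ (which is automatically an ideal contained in $J_k$, since $F_kM+V^{>1}\subseteq V^1$). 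This reduces the whole statement to two inputs: that $J_k=\shO_X$ for $k\le p-1$, which is immediate from $\partial_t^k\delta\in V^1$; and that $K_k=(f)$ for all $0\le k\le p$. Granting the latter, the case $k\le p-1$ gives ${\rm Gr}_{k+1}^F{\rm Gr}_V^1\cong\shO_X/(f)$, and the case $k=p$ gives ${\rm Gr}_{p+1}^F{\rm Gr}_V^1\cong J_p/(f)$, as claimed.

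The inclusion $(f)\subseteq K_k$ is the easy half. From the identity $f\partial_t^k\delta=t\partial_t^k\delta+k\partial_t^{k-1}\delta$, a special case of the $t$-action recalled earlier, the term $k\partial_t^{k-1}\delta$ lies in $F_kM$, while $\partial_t^k\delta\in V^{>0}$ — again by \lemmaref{lem_char_min_exponent}, since $\widetilde{\alpha}_f>p\ge k$ — forces $t\partial_t^k\delta\in V^{>1}$. Hence $f\partial_t^k\delta\in F_kM+V^{>1}$, so $f\in K_k$ and therefore $(f)\subseteq K_k$.

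The main obstacle is the reverse inclusion $K_k\subseteq(f)$: the difficulty is precisely that membership of $h\partial_t^k\delta-\sum_{j<k}g_j\partial_t^j\delta$ in $V^{>1}$ mixes the top term with the lower $F$-correction terms, so it cannot be read off coefficient by coefficient. I would dispose of this by a codimension-one reduction, exploiting that $(f)$ is principal and radical (as $D$ is reduced): an element $h$ lies in $(f)$ as soon as $f\mid h$ at every height-one prime, and the only constraints arise at the generic points of the components of $D$. Near such a point $D$ is smooth, so after restricting to a neighborhood and choosing coordinates with $f=x_1$ — exactly the local situation of the proof of \lemmaref{ideals_J'} — I can compute $K_k$ explicitly. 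Using $V^{>1}=V^2=\sum_{i\ge 0}\shO_X\,\partial_{x_1}^ix_1\delta$ together with $\partial_{x_1}^ix_1\delta=(-1)^i(x_1\partial_t^i\delta-i\partial_t^{i-1}\delta)$, I would equate coefficients of the free basis $\{\partial_t^m\delta\}$ in $h\partial_t^k\delta=\sum_{j<k}g_j\partial_t^j\delta+\sum_i a_i\partial_{x_1}^ix_1\delta$: the equations for $m>k$ force $a_m=0$ for all $m>k$ by a top-index argument in the domain $\shO_X$, and the equation for $m=k$ then reads $h=a_kx_1$, so $h\in(x_1)=(f)$. Combined with the easy inclusion this yields $K_k=(f)$ and completes the proof.
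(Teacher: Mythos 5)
Your proof is correct, and it differs from the paper's mainly in bookkeeping rather than substance: both arguments run on the same two engines. The paper first computes the \emph{full} filtration steps: using the identity $t\partial_t^i\delta=f\partial_t^i\delta-i\partial_t^{i-1}\delta$ together with $t\partial_t^i\delta\in V^{>1}$ for $i\leq k$ (exactly the identity behind your easy inclusion $(f)\subseteq K_k$), it changes basis in $F_{k+1}\iota_+\shO_X$ to $t\partial_t\delta,\ldots,t\partial_t^k\delta,\partial_t^k\delta$ and obtains a canonical isomorphism $F_{k+1}{\rm Gr}_V^1\simeq J_k/J'_k$, where $J'_k=\{h\mid h\partial_t^k\delta\in V^{>1}\}$; it then computes the transition maps $F_{k+1}{\rm Gr}_V^1\to F_{k+2}{\rm Gr}_V^1$ (the class of $1$ maps to the class of $\tfrac{1}{k+1}f$) and passes to quotients, invoking \lemmaref{ideals_J'}, i.e.\ $J'_k=(f^{k+1})$, to identify ${\rm Gr}^F_{k+2}{\rm Gr}_V^1\simeq J_{k+1}/\bigl(J'_{k+1}+(f)\bigr)=J_{k+1}/(f)$. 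You instead pass to graded pieces immediately via the leading-coefficient map $\bar\sigma_k$ and reduce everything to the single kernel computation $K_k=(f)$; note that your $K_k$ is precisely $J'_k+(f)$ in the paper's bookkeeping, so the content is equivalent, and your verification of surjectivity and of $\ker\bar\sigma_k=K_k$ correctly rests on $F_k\iota_+\shO_X\subseteq V^1$, which you get from $\widetilde{\alpha}_f>p\geq k$ and \lemmaref{lem_char_min_exponent}. Your hard inclusion $K_k\subseteq (f)$ is then proved by exactly the method of the paper's \lemmaref{ideals_J'} --- restriction to an open set with complement of codimension at least two on which ${\rm div}(f)$ is smooth, coordinates with $f=x_1$, the $\shO_X$-module generators $\partial_{x_1}^ix_1\delta$ of $V^2=V^{>1}$, and comparison of coefficients in the basis $\{\partial_t^m\delta\}$ --- with the new lower-order terms $g_j\partial_t^j\delta$, $j<k$, harmlessly absorbed since they do not contribute to coefficients in degrees $m\geq k$ (only a sign differs: the $m=k$ equation reads $h=(-1)^ka_kx_1$, which is immaterial). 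What your route buys is a one-shot argument that never needs the finer description of the unshifted filtration steps or the transition maps; what the paper's route buys is the reusable intermediate statements $F_{k+1}{\rm Gr}_V^1\simeq J_k/J'_k$ and $J'_k=(f^{k+1})$, which are of independent interest.
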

\begin{proof}
Fix $0 \le k \le p$. Since $k <\widetilde{\alpha}_f$, it follows from Lemma~\ref{lem_char_min_exponent}
that $\partial_t^i\delta\in V^{>0}$ for $0\leq i\leq k$. This implies that for every such $i$, we have
$t\partial_t^i\delta\in V^{>1}$.
Note that
$$t\partial_t^i\delta=f\partial_t^i\delta-i\partial_t^{i-1}\delta\quad\text{for}\quad 1\leq i\leq k,$$
hence $t\partial_t\delta,\ldots,t\partial_t^k,\partial_t^k\delta$ give a basis of $F_{k+1}\iota_+\shO_X$ over 
$\shO_X$.
Since all but the last one of these elements lie in $F_{k+1}V^{>1}$,  we have
 a canonical isomorphism
 \begin{equation}\label{eq2_gen_level_Gr_1}
F_{k+1}{\rm Gr}_V^1=F_{k+1}V^1/F_{k+1}V^{>1}\simeq J_k/J'_k.
\end{equation}

If $k\leq p-1$, then $\partial_t^k\delta\in V^1$ by Lemma~\ref{lem_char_min_exponent}, hence $J_k=\shO_X$. 
Moreover, via the isomorphisms (\ref{eq2_gen_level_Gr_1}), the inclusion
$$F_{k+1}{\rm Gr}_V^1\to F_{k+2}{\rm Gr}_V^1$$
maps the class of $1$ in $\shO_X/J'_{k}$ to the class of $\frac{1}{k+1}f$ in $J_{k+1}/J'_{k+1}$.
Indeed, this follows from the fact that 
$$\partial_t^k\delta=\frac{1}{k+1}f\partial_t^{k+1}\delta-\frac{1}{k+1}t\partial_t^{k+1}\delta.$$
We thus conclude that 
$${\rm Gr}^F_{k+2}{\rm Gr}^1_V\simeq J_{k+1}/\big(J'_{k+1}+(f)\big)=J_{k+1}/(f),$$
where the equality follows from Lemma~\ref{ideals_J'}.
Furthermore, as we have already mentioned, if $k\leq p-2$, then $J_{k+1}=\shO_X$, hence
${\rm Gr}^F_{k+2} {\rm Gr}^1_V\simeq \shO_X/(f)$.

On the other hand, note that we always have
$${\rm Gr}^F_1{\rm Gr}_V^1=F_1{\rm Gr}_V^1\simeq J_0/J'_0=J_0/(f),$$
where the last equality holds by Lemma~\ref{ideals_J'}. Furthermore, $J_0=\shO_X$ if $p\geq 1$. 
This completes the proof of the proposition.
\end{proof}

\begin{proposition}\label{crit2}
If $f$ defines a singular, reduced hypersurface, then the
 Hodge filtration on ${\rm Gr}_V^1(\iota_+\shO_X)$ is generated at level $n- \lceil \widetilde{\alpha}_f \rceil$.
\end{proposition}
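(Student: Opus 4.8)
The plan is to feed the structural description of Proposition~\ref{filtration_nearby} into the duality criterion of Corollary~\ref{Ext_criterion}, applied with $q = n - \lceil\widetilde{\alpha}_f\rceil$. First I would record that $q \ge 1$: since $D$ is singular we have $\widetilde{\alpha}_f \le n/2$ by \cite{Saito_microlocal} (see also \cite[Theorem~E]{MP3}), and a singular reduced hypersurface forces $n \ge 2$, so $\lceil\widetilde{\alpha}_f\rceil \le n-1$. By Corollary~\ref{Ext_criterion} it then suffices to show
$${\mathscr Ext}_{\shO_X}^j\big({\rm Gr}_{j-p}^F{\rm Gr}_V^1(\iota_+\shO_X),\shO_X\big)=0 \quad\text{for all}\quad 0\le j\le n,\ p\ge q.$$
Setting $k=j-p$, the module is zero unless $k\ge 1$ (the induced Hodge filtration on ${\rm Gr}_V^1$ begins in degree $1$), while $j\le n$ and $p\ge q$ force $k\le n-q=\lceil\widetilde{\alpha}_f\rceil$. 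So only the pieces ${\rm Gr}_k^F{\rm Gr}_V^1$ with $1\le k\le\lceil\widetilde{\alpha}_f\rceil$ contribute, and for each such $k$ the relevant cohomological degrees are $k+q\le j\le n$.

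To identify these pieces I would apply Proposition~\ref{filtration_nearby} with $p_0:=\lceil\widetilde{\alpha}_f\rceil-1$, which is legitimate because $\widetilde{\alpha}_f>p_0$. This yields ${\rm Gr}_k^F{\rm Gr}_V^1\simeq\shO_X/(f)=\shO_D$ for $1\le k\le\lceil\widetilde{\alpha}_f\rceil-1$, and ${\rm Gr}_{\lceil\widetilde{\alpha}_f\rceil}^F{\rm Gr}_V^1\simeq J_{p_0}/(f)$ for the top index $k=\lceil\widetilde{\alpha}_f\rceil$. Every contributing piece is therefore realized as an $\shO_X$-submodule of $\shO_D$, namely either $\shO_D$ itself or the ideal $J_{p_0}/(f)$ (the inclusion $(f)\subseteq J_{p_0}$ being built into Proposition~\ref{filtration_nearby}).

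For the copies of $\shO_D$ I would use the Koszul resolution $0\to\shO_X\xrightarrow{f}\shO_X\to\shO_D\to 0$, which gives ${\mathscr Ext}_{\shO_X}^j(\shO_D,\shO_X)=0$ for all $j\ne 1$; since the required degrees satisfy $j\ge k+q\ge 1+q\ge 2$, every such group vanishes — this is precisely where $q\ge 1$ is needed, to keep the single surviving degree $j=1$ of $\shO_D$ out of range. For the top piece $J_{p_0}/(f)$ the range shrinks to $j=n$ alone, so all that remains is ${\mathscr Ext}_{\shO_X}^n(J_{p_0}/(f),\shO_X)=0$, and this is the heart of the matter. On the smooth variety $X$ the sheaf ${\mathscr Ext}_{\shO_X}^n(\mathcal G,\shO_X)$ is supported exactly on the zero-dimensional associated primes of $\mathcal G$ (local duality, using that $\shO_X$ and $\omega_X$ differ by a line bundle), so it is enough to check that $J_{p_0}/(f)$ has none. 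But $J_{p_0}/(f)\subseteq\shO_D$, so its associated primes lie among those of $\shO_D$, and the reducedness of $D$ guarantees that these are just the $(n-1)$-dimensional components of $D$; hence $J_{p_0}/(f)$ has no zero-dimensional associated prime and the last Ext group vanishes. I expect this top-degree computation to be the main obstacle: it is the only place where the hypothesis that $D$ is reduced is genuinely used, and the surrounding index bookkeeping is arranged precisely so that the unavoidable nonzero Ext degrees of the lower pieces fall outside the range dictated by the criterion.
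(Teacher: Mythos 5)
Your argument is correct and is essentially the paper's own proof: the same reduction through Corollary~\ref{Ext_criterion} with $q=n-\lceil\widetilde{\alpha}_f\rceil$ (including the check $q\geq 1$ from $\widetilde{\alpha}_f\leq n/2$), the same identification of the graded pieces via Proposition~\ref{filtration_nearby} applied with $p_0=\lceil\widetilde{\alpha}_f\rceil-1$, and the same bookkeeping that kills the copies of $\shO_X/(f)$ in degrees $j\geq q+1\geq 2$ and isolates the single remaining case $j=n$ for the top piece $J_{p_0}/(f)$. The only deviation is how that last group is handled: the paper disposes of ${\mathscr Ext}^n_{\shO_X}\big(J_{p_0}/(f),\shO_X\big)$ more cheaply, exhibiting it as a quotient of ${\mathscr Ext}^n_{\shO_X}\big(\shO_X/(f),\shO_X\big)=0$ via the short exact sequence $0\to J_{p_0}/(f)\to \shO_X/(f)\to \shO_X/J_{p_0}\to 0$, so your local-duality/associated-primes argument, while valid, is not needed—and, contrary to your closing remark, reducedness is not genuinely used at that step, having already been consumed by Proposition~\ref{filtration_nearby} through Lemma~\ref{ideals_J'}.
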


\begin{proof}
Equivalently, we need to check that if $p$ is a nonnegative integer such that $\widetilde{\alpha}_f>p$, then the filtration on ${\rm Gr}_V^1$ is generated at level $n-1-p$. (Note that since $f$ defines a singular hypersurface, we have $\widetilde{\alpha}_f\leq\frac{n}{2}$ as mentioned in the introduction, hence our assumption on $p$
implies $n-1-p\geq 1$.) It follows then from Corollary  \ref{Ext_criterion} that it is enough to show: 
\begin{equation}\label{eq1_gen_level_Gr_1}
{\mathscr Ext}_{\shO_X}^j({\rm Gr}^F_{j-i}{\rm Gr}^1_V,\shO_X)=0\quad\text{for}\quad 0\leq j\leq n\quad\text{and}\quad i>n-2-p.
\end{equation}
Note that we only need to consider $i$ and $j$ such that $0\leq j-i-1\leq n-i-1\leq p$.

To see this, we use the isomorphisms in Proposition \ref{filtration_nearby}.
First, the short exact sequence
$$0\longrightarrow \shO_X\overset{\cdot f}\longrightarrow \shO_X\longrightarrow \shO_X/(f)\longrightarrow 0$$
gives ${\mathscr Ext}_{\shO_X}^m\big(\shO_X/(f),\shO_X\big)=0$ for all $m\geq 2$. We thus see that if $0\leq j-i-1\leq p-1$, we have 
$${\mathscr Ext}_{\shO_X}^j({\rm Gr}^F_{j-i}{\rm Gr} _V^1,\shO_X)\simeq {\mathscr Ext}_{\shO_X}^j\big(\shO_X/(f),\shO_X\big)=0,$$
since $j\geq i+1\geq n-p\geq 2$. On the other hand, if $j-i-1=p$, then $j=n$, and
 the short exact sequence
$$0\to J_p/(f)\to \shO_X/(f)\to \shO_X/J_p\to 0$$
implies that 
$${\mathscr Ext}_{\shO_X}^n\big({\rm Gr}^F_{p+1}{\rm Gr}^1_V,\shO_X\big)\simeq {\mathscr Ext}_{\shO_X}^n\big(J_p/(f),\shO_X\big)$$ 
is a quotient of ${\mathscr Ext}_{\shO_X}^n\big(\shO_X/(f),\shO_X\big)=0$. This completes the proof of the proposition.
\end{proof}

\begin{remark}
In the statements of Propositions~\ref{crit1}, \ref{crit0}, and \ref{crit2}, we assumed that the hypersurface defined by $f$ is singular, in order to avoid the case
when $\widetilde{\alpha}_f=\infty$. If $f$ defines a smooth hypersurface, then ${\rm Gr}_V^{\alpha}$ is nonzero only when $\alpha$ is an integer and 
the Hodge filtration on both
${\rm Gr}_V^0$ and ${\rm Gr}_V^1$ is generated in level $0$.
\end{remark}

\subsection{The Hodge filtrations on $V^{\alpha}$ and ${\mathcal M}(f^{-\alpha})$}
Let $\pi\colon X\times\CC\to X$ be the projection onto the first component. Given $\alpha\in\QQ$, we consider the map
$$\tau_{\alpha}\colon \pi_*V^{\alpha}\iota_+\shO_X\to {\mathcal M}(f^{-\alpha})$$
given by
$$\tau_{\alpha}\left(\sum_{i=0}^pv_i\partial_t^i\delta\right)=\left(\sum_{i=0}^pQ_i(\alpha)\frac{v_i}{f^i}\right)f^{-\alpha},$$
where $Q_i(x)=\prod_{j=0}^{i-1}(x+j)$ (with the convention that $Q_0=1$). Note that both sides have $\Dmod_X$-module structure; in fact 
$\pi_*V^\alpha\iota_+\shO_X$ is naturally a $\Dmod_X[t, \partial_t t]$-module.

\begin{lemma}\label{lem1}
The map $\tau_{\alpha}$ is a morphism of $\Dmod_X$-modules. Moreover, we have
\begin{equation}\label{eq1_lem1}
\tau_{\alpha+1}(tv)=\tau_{\alpha}(v)\quad \text{for every}\quad v\in \pi_*V^{\alpha}\iota_+\shO_X\quad\text{and}
\end{equation}
\begin{equation}\label{eq2_lem1}
\tau_{\alpha}(\partial_tv)=\alpha\cdot \tau_{\alpha+1}(v)\quad\text{for every}\quad v\in \pi_*V^{\alpha+1}\iota_+\shO_X,
\end{equation}
where the equalities hold via the identification in ${\rm (}$$\ref{identification}$${\rm )}$. 
\end{lemma}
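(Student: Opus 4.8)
The plan is to verify each of the three assertions by direct computation, using the explicit formula for $\tau_\alpha$ together with the module structure of $\iota_+\shO_X$ described in \S\ref{scn:general}. The essential combinatorial input is the recursion $Q_{i+1}(x) = (x+i)\cdot Q_i(x)$ satisfied by the polynomials $Q_i$, which will match up with the relations $t(v\partial_t^i\delta) = fv\partial_t^i\delta - iv\partial_t^{i-1}\delta$ and $P(v\partial_t^i\delta) = P(v)\partial_t^i\delta - P(f)v\partial_t^{i+1}\delta$.

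First I would check that $\tau_\alpha$ is $\Dmod_X$-linear. It is manifestly $\shO_X$-linear, so it suffices to check compatibility with the action of a derivation $P \in \mathrm{Der}_\CC(\shO_X)$. Applying $P$ on the source using the displayed derivation formula gives $P\bigl(\sum_i v_i\partial_t^i\delta\bigr) = \sum_i \bigl(P(v_i)\partial_t^i\delta - P(f)v_i\partial_t^{i+1}\delta\bigr)$; I would push this through $\tau_\alpha$ and, on the other side, compute $P$ applied to $\tau_\alpha\bigl(\sum_i v_i\partial_t^i\delta\bigr)$ using the twisted action $P(wf^{-\alpha}) = \bigl(P(w) - \alpha w P(f)/f\bigr)f^{-\alpha}$. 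Both computations produce a sum of terms in $f^{-\alpha}$, and after collecting coefficients the identity $Q_{i+1}(\alpha) = (\alpha + i)Q_i(\alpha)$ is exactly what makes the two sides agree. I expect the only subtlety is bookkeeping of the index shift between the $\partial_t^i\delta$ term and the $\partial_t^{i+1}\delta$ term and the factors of $1/f$.

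For identity (\ref{eq1_lem1}), I would compute $\tau_{\alpha+1}(tv)$ for $v = \sum_i v_i\partial_t^i\delta$, using $t(v_i\partial_t^i\delta) = fv_i\partial_t^i\delta - iv_i\partial_t^{i-1}\delta$ to rewrite $tv$ in the standard basis, then apply $\tau_{\alpha+1}$ with its coefficients $Q_i(\alpha+1)$. The key observation is that $Q_i(\alpha+1) = Q_{i+1}(\alpha)/\alpha$ (equivalently $\alpha\, Q_i(\alpha+1) = Q_{i+1}(\alpha)$), so after accounting for the identification (\ref{identification}) sending $wf^{-\alpha-1}$ to $(w/f)f^{-\alpha}$ — equivalently $wf^{-\alpha}\mapsto (wf)f^{-\alpha-1}$ — the resulting coefficients collapse to those defining $\tau_\alpha(v)$. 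For (\ref{eq2_lem1}), I would similarly expand $\tau_\alpha(\partial_t v)$, where $\partial_t(v_i\partial_t^i\delta) = v_i\partial_t^{i+1}\delta$ simply raises the index, so the coefficient of the $\partial_t^{i+1}\delta$ term becomes $Q_{i+1}(\alpha)v_i/f^{i+1}$; comparing with $\alpha\,\tau_{\alpha+1}(v)$, whose corresponding coefficient is $\alpha\,Q_i(\alpha+1)v_i/f^i$ pushed across the identification, the same relation $Q_{i+1}(\alpha) = \alpha\,Q_i(\alpha+1)$ yields equality.

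The main obstacle, such as it is, will be keeping the two conventions straight: the index shift inherent in passing from $\partial_t^i\delta$ to $\partial_t^{i+1}\delta$, and the twist by $f$ built into the isomorphism (\ref{identification}) under which the equalities are asserted to hold. None of the steps is conceptually deep once the recursion $Q_{i+1}(x) = (x+i)Q_i(x)$ is isolated; the whole lemma is essentially the statement that this recursion is compatible with the $t$- and $\partial_t$-actions after the correct normalizations. I would therefore organize the proof around that single identity and treat each of the three claims as a short verification.
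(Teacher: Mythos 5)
Your proposal is correct and follows essentially the same route as the paper's proof: expand in the basis $\partial_t^i\delta$, use the explicit $t$-, $\partial_t$-, and derivation actions together with the twisted action on $\Mmod(f^{-\alpha})$, and match coefficients via the recursion for the $Q_i$. One small bookkeeping point: for (\ref{eq1_lem1}) the identity that actually closes the computation is $Q_i(\alpha+1)-iQ_{i-1}(\alpha+1)=Q_i(\alpha)$ (the paper's choice), while $Q_{i+1}(\alpha)=\alpha\,Q_i(\alpha+1)$, which you cite there, is the one needed for (\ref{eq2_lem1}) — but both are immediate consequences of $Q_{i+1}(x)=(x+i)Q_i(x)$, so this is a relabeling rather than a gap.
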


\begin{proof}
We may and will assume that $X$ is affine.
The fact that $\tau_{\alpha}(gu)=g\cdot\tau_{\alpha}(u)$ for every $g\in\shO_X(X)$ and every global section $u$ of $V^{\alpha}$ is clear. 
Suppose now that $v=\sum_{i=0}^pv_i\partial_t^i\delta\in V^{\alpha}$ and $P$ is a $\CC$-derivation of $\shO_X(X)$. We have
$$Pv=\sum_{i=0}^pP(v_i)\partial_t^i\delta-\sum_{i=0}^pP(f)v_i\partial_t^{i+1}\delta,$$
hence 
$$\tau_{\alpha}(Pv)=\left(\sum_{i=0}^pQ_i(\alpha)\frac{P(v_i)}{f^i}-\sum_{i=0}^pQ_{i+1}(\alpha)\frac{v_iP(f)}{f^{i+1}}\right)f^{-\alpha}$$
$$=P \left( \left(\sum_{i=0}^pQ_i(\alpha)\frac{v_i}{f^i}\right)f^{-\alpha}\right)=P (\tau_{\alpha}(v)),$$
where we used the fact that $Q_{i+1}(\alpha)=(\alpha+i)Q_i(\alpha)$ and
$$P \left(\frac{h}{f^i}f^{-\alpha}\right)=\frac{P(h)}{f^i}f^{-\alpha}-\frac{(\alpha+i)hP(f)}{f^{i+1}}f^{-\alpha}.$$

By the definition of the $V$-filtration, if $v \in V^\alpha$, then $tv\in V^{\alpha+1}$ (and for $\alpha > 0$,
multiplication by $t$ induces an isomorphism of $\Dmod_X$-modules $V^\alpha \to V^{\alpha +1}$).
In order to prove (\ref{eq1_lem1}), note first that if $v=\sum_{i=0}^pv_i\partial_t^i\delta$, then
$$tv=\sum_{i=0}^pfv_i\partial_t^i\delta-\sum_{i=1}^piv_i\partial_t^{i-1}\delta.$$
We thus have
$$\tau_{\alpha+1}(tv)=\left(\sum_{i=0}^pQ_i(\alpha+1)\frac{fv_i}{f^i}-\sum_{i=1}^pQ_{i-1}(\alpha+1)\frac{iv_i}{f^{i-1}}\right)f^{-\alpha-1}.$$
Since
$$Q_i(\alpha+1)-iQ_{i-1}(\alpha+1)=Q_i(\alpha)\quad\text{for}\quad i\geq 1$$
and $Q_0(\alpha+1)=Q_0(\alpha)$, we conclude that $\tau_{\alpha+1}(tv) = 
\tau_{\alpha} (v)$ via ($\ref{identification}$).

Suppose now that $v=\sum_{i=0}^pv_i\partial_t^i\delta\in V^{\alpha+1}$, hence
$\partial_t v=\sum_{i=0}^pv_i\partial_t^{i+1}\delta \in V^\alpha$. We then have
$$\tau_{\alpha}(\partial_tv)=\left(\sum_{i=0}^pQ_{i+1}(\alpha)\frac{v_i}{f^{i+1}}\right)f^{-\alpha}=
\alpha\cdot\left(\sum_{i=0}^pQ_i(\alpha+1)\frac{v_i}{f^i}\right)f^{-\alpha-1}=\alpha\cdot\tau_{\alpha+1}(v),$$
which proves (\ref{eq2_lem1}).
\end{proof}

\begin{proposition}\label{filtration_on_image}
If $D = {\rm div}(f)$ is a reduced divisor, then for every $\alpha>0$ the morphism $\tau_{\alpha}$ is surjective, and the Hodge filtration on the image is, up to a shift by $1$, the induced filtration from that on $V^{\alpha}\iota_+\shO_X$. More precisely, we have
$$F_p{\mathcal M}(f^{-\alpha})=\tau_{\alpha}(F_{p+1}V^{\alpha}\iota_+\shO_X)\quad\text{for all}\quad p\geq 0.$$
\end{proposition}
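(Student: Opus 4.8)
The strategy is to read the statement off directly from the description of the Hodge ideals $I_p(\alpha D)$ supplied by \theoremref{HI-Vfil}; indeed, the content of the proposition is essentially a reinterpretation of that result. The plan is to unwind three ingredients and match them: (i) the induced filtration on the submodule $V^\alpha\iota_+\shO_X \subseteq \iota_+\shO_X$ is the intersection filtration, so that $F_{p+1}V^\alpha\iota_+\shO_X = F_{p+1}(\iota_+\shO_X)\cap V^\alpha\iota_+\shO_X$; (ii) by \eqref{Hodge_O}, $F_{p+1}(\iota_+\shO_X) = \bigoplus_{i=0}^p \shO_X\, \partial_t^i\delta$ is free over $\shO_X$ on $\delta,\partial_t\delta,\dots,\partial_t^p\delta$, so a section of $F_{p+1}V^\alpha\iota_+\shO_X$ is precisely an expression $\sum_{i=0}^p v_i\partial_t^i\delta$ with $v_i\in\shO_X$ lying in $V^\alpha\iota_+\shO_X$; and (iii) the Hodge filtration on the target is $F_p\mathcal{M}(f^{-\alpha}) = I_p(\alpha D)\otimes\shO_X(pD)\, f^{-\alpha}$, which, since $D={\rm div}(f)$ identifies $\shO_X(pD)$ with $f^{-p}\shO_X$, reads $F_p\mathcal{M}(f^{-\alpha}) = f^{-p}\, I_p(\alpha D)\, f^{-\alpha}$ inside $\mathcal{M}(f^{-\alpha})=\shO_X(*D)f^{-\alpha}$.

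Next I would simply compute $\tau_\alpha$ on such a section. Writing out the definition and clearing denominators,
$$\tau_\alpha\Big(\sum_{i=0}^p v_i\partial_t^i\delta\Big)=\Big(\sum_{i=0}^p Q_i(\alpha)\, v_i f^{-i}\Big)f^{-\alpha}=f^{-p}\Big(\sum_{i=0}^p Q_i(\alpha)\, f^{p-i} v_i\Big)f^{-\alpha}.$$
The point is that, as $\sum_{i=0}^p v_i\partial_t^i\delta$ ranges over all sections of $F_{p+1}V^\alpha\iota_+\shO_X$, i.e. over all $\sum v_i\partial_t^i\delta\in V^\alpha\iota_+\shO_X$ with $v_i\in\shO_X$, the numerators $\sum_{i=0}^p Q_i(\alpha)\, f^{p-i} v_i$ range over \emph{exactly} the ideal $I_p(\alpha D)$, by the explicit description in \theoremref{HI-Vfil}. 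Hence $\tau_\alpha(F_{p+1}V^\alpha\iota_+\shO_X)=f^{-p}\, I_p(\alpha D)\, f^{-\alpha}=F_p\mathcal{M}(f^{-\alpha})$, which is the asserted equality of filtrations; here the containment ``$\subseteq$'' uses that every such combination lies in $I_p(\alpha D)$, while ``$\supseteq$'' uses that every element of $I_p(\alpha D)$ arises in this way.

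Finally, surjectivity of $\tau_\alpha$ follows formally: since the Hodge filtration on $\mathcal{M}(f^{-\alpha})$ is exhaustive, $\mathcal{M}(f^{-\alpha})=\bigcup_{p\ge0}F_p\mathcal{M}(f^{-\alpha})$, and each $F_p\mathcal{M}(f^{-\alpha})$ lies in the image by the equality just proved. I do not expect any genuine obstacle, because the arithmetic heart of the statement --- that the two descriptions of $I_p(\alpha D)$, one via combinations from the $V$-filtration and one via Hodge ideals, coincide --- is exactly the content of \theoremref{HI-Vfil}, which we are free to invoke. The only points needing a little care are purely formal: verifying that the induced filtration on $V^\alpha\iota_+\shO_X$ is the intersection filtration (so that a filtered section really is an honest $\sum_{i=0}^p v_i\partial_t^i\delta$ with the $v_i$ regular and only finitely many nonzero), and confirming that the degree shift by $1$ relating $F_{p+1}V^\alpha\iota_+\shO_X$ to $F_p\mathcal{M}(f^{-\alpha})$ is precisely the one built into the conventions of \eqref{Hodge_O} and the Hodge-ideal normalization. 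Both are immediate once the definitions are in place.
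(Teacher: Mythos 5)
Your argument is correct and is essentially the paper's own proof: both identify $F_{p+1}V^{\alpha}\iota_+\shO_X$ with the sums $\sum_{i=0}^p v_i\partial_t^i\delta$ lying in $V^{\alpha}\iota_+\shO_X$ via \eqref{Hodge_O}, read the equality $F_p\mathcal{M}(f^{-\alpha})=\tau_{\alpha}\big(F_{p+1}V^{\alpha}\iota_+\shO_X\big)$ directly off \theoremref{HI-Vfil}, and deduce surjectivity of $\tau_{\alpha}$ from the exhaustiveness of the Hodge filtration. The only difference is that you make explicit the identification $F_p\mathcal{M}(f^{-\alpha})=f^{-p}\,I_p(\alpha D)\,f^{-\alpha}$ and the matching of numerators, which the paper compresses into the phrase ``precisely the content of Theorem~\ref{HI-Vfil}.''
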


\begin{proof}
Thanks to ($\ref{Hodge_O}$), the elements of $F_{p+1}V^{\alpha}$ are the sums $\sum_{i=0}^pv_i\partial_t^i\delta$ that belong to $V^\alpha$. The fact that for all $\alpha > 0$ we have
$$F_p{\mathcal M}(f^{-\alpha})=\tau_{\alpha}(F_{p+1}V^{\alpha})\quad\text{for all}\quad p\geq 0$$
is then precisely the content of Theorem~\ref{HI-Vfil}.
Since the Hodge filtration on $\Mmod(f^{-\alpha})$ is exhaustive, we deduce that $\tau_{\alpha}$ is surjective. 
\end{proof}

\begin{remark}
The same statement holds more generally when $D = {\rm div}(f)$ is not necessarily reduced, but $\alpha >0$ is such that $\lceil \alpha D \rceil$ is reduced. For this one simply needs to 
refer to \cite[Theorem~A]{MP3} instead.
\end{remark}

\subsection{Proof of the main result}
 We begin with the following general (and well-known) fact:

\begin{lemma}\label{nonpositive}
If $u\in\iota_+\shO_X$ is such that $\partial_tu\in V^{\alpha}$ for some $\alpha\leq 0$, then $u\in V^{\alpha+1}$.
\end{lemma}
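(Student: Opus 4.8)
The plan is to read the statement off from the behaviour of $\partial_t$ on the graded pieces ${\rm Gr}_V^\beta := {\rm Gr}_V^\beta(\iota_+\shO_X)$. Since the $V$-filtration is discrete, exhaustive and left continuous, any nonzero $u$ has a well-defined order $\beta := \ord_V(u)$, i.e. $u\in V^\beta\smallsetminus V^{>\beta}$, with nonzero symbol $\bar u$ in ${\rm Gr}_V^\beta$. Assuming for contradiction that $u\notin V^{\alpha+1}$, we get $\beta<\alpha+1\le 1$. The entire argument then reduces to the claim that $\partial_t\colon {\rm Gr}_V^\beta\to {\rm Gr}_V^{\beta-1}$ is injective: granting this, $\partial_t\bar u\neq 0$ in ${\rm Gr}_V^{\beta-1}$, so $\ord_V(\partial_t u)=\beta-1<\alpha$, contradicting $\partial_t u\in V^\alpha$.

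First I would dispose of the case $\beta\neq 0$, which is purely formal. By one of the defining properties of the $V$-filtration, $t\partial_t$ acts on ${\rm Gr}_V^\beta$ with the single generalized eigenvalue $\beta$, hence is invertible there whenever $\beta\neq 0$; consequently $\partial_t$ is injective on ${\rm Gr}_V^\beta$, since $\partial_t\bar u=0$ would give $t\partial_t\bar u=0$ and thus $\bar u=0$. In view of $\alpha\le 0$ this settles every $\beta<\alpha+1$ with $\beta\neq 0$.

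The remaining case $\beta=0$ (possible only when $\alpha\in(-1,0]$) is where $t\partial_t$ is merely nilpotent on ${\rm Gr}_V^0$ and injectivity of $\partial_t$ can genuinely fail for a general $\Dmod$-module; it fails, for instance, for $\shO_X(*D)$, whose constant sections lie in $\ker\partial_t$. This is where I expect the main obstacle to lie, and where the special nature of $\iota_+\shO_X$ must enter. The point I would use is that $\iota_+\shO_X$ has no nonzero sub- or quotient-module supported on $\{t=0\}$: indeed $\shO_X$ is a simple $\Dmod_X$-module, so by Kashiwara's equivalence its graph pushforward $\iota_+\shO_X$ is a simple $\Dmod_{X\times\CC}$-module, and it is not supported on $\{t=0\}$ because $f\not\equiv 0$. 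For such a minimal extension across $\{t=0\}$ the canonical maps at the bottom of the $V$-filtration are isomorphisms; concretely $\partial_t\colon {\rm Gr}_V^0\to {\rm Gr}_V^{-1}$ is bijective, which is exactly the injectivity needed to close the case $\beta=0$ and finish the proof. Equivalently, the whole lemma can be phrased as the assertion that $\partial_t\colon {\rm Gr}_V^\alpha\to {\rm Gr}_V^{\alpha-1}$ is bijective for all $\alpha\le 0$, the content for $\alpha<0$ being the eigenvalue computation above and the content at $\alpha=0$ being the simplicity of $\iota_+\shO_X$.
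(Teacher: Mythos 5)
Your overall skeleton --- take $\beta$ maximal with $u\in V^{\beta}$, assume $\beta<\alpha+1$, and derive a contradiction from injectivity of $\partial_t$ on ${\rm Gr}_V^{\beta}$ --- is exactly the paper's strategy. But your treatment of the boundary case contains a genuine error, rooted in an off-by-one in the indexing convention. In the normalization used in this paper (that of \cite{MP3} and \cite{BS}), it is $\partial_t t-\gamma$, equivalently $t\partial_t-(\gamma-1)$, that acts nilpotently on ${\rm Gr}_V^{\gamma}(\iota_+\shO_X)$; so $t\partial_t$ has generalized eigenvalue $\gamma-1$ on ${\rm Gr}_V^{\gamma}$, not $\gamma$. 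Consequently $\partial_t\colon {\rm Gr}_V^{\gamma+1}\to{\rm Gr}_V^{\gamma}$ is bijective for \emph{every} $\gamma\neq 0$, purely formally; the only non-formal map is ${\rm can}\colon{\rm Gr}_V^1\to{\rm Gr}_V^0$, from unipotent nearby to vanishing cycles. Since in the lemma $\beta<\alpha+1\leq 1$ forces $\gamma=\beta-1\neq 0$, the exceptional index never occurs: the entire lemma is formal, with no special case at $\beta=0$ and no appeal to the special nature of $\iota_+\shO_X$. This is precisely the paper's (very short) proof.

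Your patch for the case $\beta=0$ is incorrect even on its own terms, and this matters because with your normalization ($t\partial_t-\beta$ nilpotent on ${\rm Gr}_V^{\beta}$) the exceptional map genuinely falls inside the lemma's range --- indeed, in that normalization the statement would only hold for $\alpha\leq -1$, and fails at $\alpha=0$: take $f=x_1$ smooth and $u=\delta$, where $V$ (your indexing) satisfies $V^0=\iota_+\shO_X$ but $\delta\notin V^1$. The error is this: simplicity of $\iota_+\shO_X$, i.e.\ the absence of nonzero sub- or quotient modules supported on $\{t=0\}$, yields that ${\rm can}$ is \emph{surjective} and ${\rm var}$ is \emph{injective}; it does not make ${\rm can}$ injective, hence not bijective. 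Already for smooth $f$ one has $\psi_{f,1}\neq 0$ while $\phi_{f,1}=0$, so ${\rm can}$ has a large kernel, and $\ker({\rm can})$ is typically nonzero for singular $f$ as well. So the bijectivity of $\partial_t\colon{\rm Gr}_V^0\to{\rm Gr}_V^{-1}$ that you assert (in your indexing) is false, and your argument as written would prove a false statement. Once you adopt the paper's indexing, the difficulty you located at $\beta=0$ evaporates --- there $\partial_t\colon{\rm Gr}_V^0\to{\rm Gr}_V^{-1}$ is bijective for trivial eigenvalue reasons ($\gamma=-1\neq 0$) --- and the Kashiwara-equivalence/simplicity machinery is unnecessary.
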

\begin{proof}
Certainly if $\beta\ll 0$, then $u\in V^{\beta}$. We may assume that $u\neq 0$ and choose $\beta$ which is largest with this property, so that $u\not\in V^{>\beta}$. If $\beta\geq\alpha+1$, then we are done. Otherwise 
$\beta-1<\alpha\leq 0$, and $\partial_tu$ vanishes in ${\rm Gr}^{\beta-1}_V$. Recall however that an easy 
consequence of the definition of the $V$-filtration is that for every $\gamma \neq 0$, the map
$${\rm Gr}^{\gamma+1}_V\overset{\partial_t\cdot}\longrightarrow {\rm Gr}^{\gamma}_V$$
is bijective. It follows that $u$ vanishes in ${\rm Gr}_V^{\beta}$, a contradiction.
\end{proof}

Next, using the result of the previous section, we show that in order to bound the 
generation level of $\Mmod (f^{-\alpha})$ for any $\alpha >0$, it suffices to study the Hodge filtration on the associated graded terms ${\rm Gr}^{\beta}_V$, for special rational $\beta$.

\begin{corollary}\label{cor1}
If $\alpha\in (0,1]$ is a rational number and $q\geq 0$ is such that the Hodge filtration on ${\rm Gr}_V^{\beta}(\iota_+\shO_X)$ is generated at level $q+1$ for all $\beta\in 
[\alpha,1]$, then the Hodge filtration on ${\mathcal M}(f^{-\alpha})$ is generated at level $q$.
\end{corollary}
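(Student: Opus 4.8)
The plan is to transport the generation question across the surjection $\tau_{\alpha}\colon \pi_*V^{\alpha}\iota_+\shO_X\to\Mmod(f^{-\alpha})$ furnished by \propositionref{filtration_on_image}. Since $\tau_{\alpha}$ is $\Dmod_X$-linear we have $P\cdot\tau_{\alpha}(v)=\tau_{\alpha}(Pv)$ for $P\in\Dmod_X$, and since $F_p\Mmod(f^{-\alpha})=\tau_{\alpha}(F_{p+1}V^{\alpha})$, the Hodge filtration on $\Mmod(f^{-\alpha})$ is generated at level $q$ precisely when
$$\tau_{\alpha}(F_{p+2}V^{\alpha})=\tau_{\alpha}\big(F_1\Dmod_X\cdot F_{p+1}V^{\alpha}\big)\quad\text{for all}\quad p\ge q.$$
The inclusion $\supseteq$ is automatic, so the content is to show that every $v\in F_{p+2}V^{\alpha}$ satisfies $\tau_{\alpha}(v)\in\tau_{\alpha}(F_1\Dmod_X\cdot F_{p+1}V^{\alpha})$ once $p\ge q$. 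Note that the shift by $1$ in \propositionref{filtration_on_image} is exactly what converts the hypothesis ``level $q+1$ for the graded pieces'' into the conclusion ``level $q$ for the image.''

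First I would run a dévissage along the (discrete) $V$-filtration. All the filtrations involved—on $V^{\beta}$, on $V^{>\beta}$, and on ${\rm Gr}_V^{\beta}$—are induced from the Hodge filtration on $\iota_+\shO_X$, so each inclusion $V^{>\beta}\hookrightarrow V^{\beta}$ is strict, i.e. $F_mV^{>\beta}=F_mV^{\beta}\cap V^{>\beta}$. Given $v\in F_{p+2}V^{\beta}$ with $\beta\in[\alpha,1]$ and $p\ge q$, its image $\bar v\in F_{p+2}{\rm Gr}_V^{\beta}$ lies, by the generation hypothesis at level $q+1$ (applicable since $p+1\ge q+1$), in $F_1\Dmod_X\cdot F_{p+1}{\rm Gr}_V^{\beta}$; lifting the chosen generators to $F_{p+1}V^{\beta}\subseteq F_{p+1}V^{\alpha}$ and subtracting, strictness places the remainder in $F_{p+2}V^{>\beta}$. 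Since the $V$-filtration has only finitely many jumps in $[\alpha,1]$, iterating this reduces $v$, modulo $F_1\Dmod_X\cdot F_{p+1}V^{\alpha}$, to an element $v'\in F_{p+2}V^{>1}$.

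It then remains to check that this tail is harmless, i.e. $\tau_{\alpha}(F_{p+2}V^{>1})\subseteq\tau_{\alpha}(F_{p+1}V^{\alpha})=F_p\Mmod(f^{-\alpha})$; this is the main obstacle. Here I would exploit that multiplication by $t$ is an isomorphism $V^{>0}\xrightarrow{\ \sim\ }V^{>1}$, so $v'=tu$ for a unique $u\in V^{>0}$; comparing $\partial_t$-degrees in $v'=tu$ shows the top coefficient of $v'$ is divisible by $f$, which is the arithmetic that drops $\tau_{\alpha}(v')$ one Hodge step. Since $\alpha\le 1$ gives $V^{>0}\subseteq V^{\alpha-1}$, relation \eqref{eq1_lem1} applied with index $\alpha-1$ yields $\tau_{\alpha}(v')=\tau_{\alpha}(tu)=\tau_{\alpha-1}(u)$ under the identification \eqref{identification}; \lemmaref{nonpositive} supplies the corresponding integrability in the lower-degree terms, and the explicit formula for the Hodge filtration in \theoremref{HI-Vfil} lets one verify directly that the resulting element lands in $F_p\Mmod(f^{-\alpha})$. (An equivalent route is to continue the dévissage past $\beta=1$, using that $t\colon{\rm Gr}_V^{\beta}\to{\rm Gr}_V^{\beta+1}$ is a filtered isomorphism for $\beta>0$ to extend the hypothesis to all $\beta\ge\alpha$, and to terminate by noting that for $\beta$ beyond a fixed threshold the high $V$-order forces the coefficients into high powers of $f$, whence $\tau_{\alpha}(F_{p+2}V^{\beta})\subseteq F_0\Mmod(f^{-\alpha})$.) Granting the tail estimate, $\tau_{\alpha}(v')\in F_p\Mmod(f^{-\alpha})\subseteq F_1\Dmod_X\cdot F_p\Mmod(f^{-\alpha})$, and combining with the dévissage gives $\tau_{\alpha}(v)\in\tau_{\alpha}(F_1\Dmod_X\cdot F_{p+1}V^{\alpha})$, as required. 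I expect the careful bookkeeping of Hodge levels in this last step—reconciling the shift in \eqref{identification} with the divisibility by $f$ coming from $t$-invertibility—to be the delicate point.
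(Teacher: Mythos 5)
Your proposal is correct and is essentially the paper's own argument: transport the question across $\tau_{\alpha}$ via \propositionref{filtration_on_image}, run the d\'evissage through the finitely many jumps of the $V$-filtration in $[\alpha,1]$ using the level-$(q+1)$ hypothesis on the graded pieces, and absorb the remainder in $F_{\bullet}V^{>1}$ by writing it as $tu$ with $u\in V^{>0}$, combining \lemmaref{nonpositive} (to place the lower-order part $w$ with $\partial_t w=u-u_0\delta$ in $V^1\subseteq V^{\alpha}$) with the relations (\ref{eq1_lem1})--(\ref{eq2_lem1}) and \theoremref{HI-Vfil}, exactly as in the paper. The only (harmless) deviation is that you obtain the Hodge-level control on $u$ by comparing $\partial_t$-degrees in $v'=tu$, where the paper instead cites the proof of \cite[Lemma~4.5]{MP3}.
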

\begin{proof}
We need to show that $F_p{\mathcal M}(f^{-\alpha})\subseteq F_1\Dmod_X\cdot F_{p-1}{\mathcal M}(f^{-\alpha})$
for every $p>q$. Given such $p$ and $u\in F_p {\mathcal M}(f^{-\alpha})$,
it follows from Proposition~\ref{filtration_on_image} that we can find 
$\widetilde{u}\in F_{p+1}V^{\alpha}$ such that
$\tau_{\alpha}(\widetilde{u})=u$. The $V$-filtration is discrete,  hence after using the hypothesis finitely many times, we obtain
$$F_{p+1}V^{\alpha}\subseteq F_1\Dmod_X\cdot F_{p}V^{\alpha}+F_{p+1}V^{>1}.$$
Since $\tau_{\alpha}$ maps $F_1\Dmod_X\cdot F_{p}V^{\alpha}$ to $F_1\Dmod_X\cdot F_{p-1}{\mathcal M}(f^{-\alpha})$, we may clearly assume that 
$\widetilde{u}\in F_{p+1}V^{>1}$. In this case we can write $\widetilde{u}=tv$ for some $v\in F_{p+1}V^{>0}$; see for instance (the proof of) \cite[Lemma~4.5]{MP3}. Furthermore, by the definition of $F_{p+1} \iota_+\shO_X$, 
we can write $v=v_0\delta+\partial_tw$, for some $v_0 \in \shO_X$ and $w\in F_{p}\iota_+\shO_X$. 
Note that $\delta\in V^{>0}$, hence $v_0\delta\in V^{>0}$, and thus $\partial_tw\in V^{>0}$. By Lemma~\ref{nonpositive}, we have
$w\in F_{p}V^1$, so in particular $w\in F_{p}V^{\alpha}$. Since $tv_0\delta=v_0f\delta$, we have
$$u=\tau_{\alpha}(\widetilde{u})=\tau_{\alpha}(tv_0\delta+t\partial_tw)=(v_0f)f^{-\alpha}+\tau_{\alpha}(t\partial_tw)=(v_0f)f^{-\alpha}
+\alpha\cdot\tau_{\alpha}(w),$$
where the last equality follows from (\ref{eq1_lem1}) and (\ref{eq2_lem1}). But $(v_0f)f^{-\alpha}\in F_0{\mathcal M}(f^{-\alpha})$, which follows for example from Proposition~\ref{filtration_on_image}, since $f\delta\in V^{>1}\subseteq V^{\alpha}$ by (\ref{multiplier}).
Also, since $w\in F_{p}V^{\alpha}$, it follows from Proposition~\ref{filtration_on_image} that
$\tau_{\alpha}(w)\in F_{p-1}{\mathcal M}(f^{-\alpha})$.
We conclude that $u\in F_{p-1}{\mathcal M}(f^{-\alpha})$, completing the proof.
\end{proof}

We are finally able to give the proof of the main result:

\begin{proof}[Proof of Theorem~\ref{main3}]
According to Corollary \ref{cor1}, it suffices to know that ${\rm Gr}_V^{\beta}(\iota_+\shO_X)$ is generated at level $n-\lceil\widetilde{\alpha}_f+\alpha\rceil+1$ for all $\beta\in [\alpha,1]$. But this follows from Propositions ~\ref{crit1} and \ref{crit2}, which show that each ${\rm Gr}_V^{\beta}(\iota_+\shO_X)$ is generated at level $n-\lceil\widetilde{\alpha}_f+\beta\rceil+1$.
\end{proof}

\subsection{Proof of Theorem \ref{local_strong}}
Consider a reduced complex scheme $D$,
which can be embedded as a hypersurface in a smooth variety $X$, with minimal exponent 
$\widetilde{\alpha}_D$. 
We consider a resolution of singularities $\mu \colon \widetilde{D} \to D$. 
(Recall that by this we mean the disjoint union of resolutions of the irreducible components of $D$.) 
We further assume that $f$ is an isomorphism over the smooth locus of $D$ and that the reduced inverse image
of the singular locus $D_{\rm sing}$ of $D$ is a simple normal crossing divisor $E$ on $\widetilde{D}$.

We start with the following observation:

\begin{lemma}\label{independence}
The statement of Theorem \ref{local_strong} is independent of the choice of such a resolution.
\end{lemma}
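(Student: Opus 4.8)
The plan is to deduce the statement from the birational invariance of the sheaves $R^i\mu_*\Omega^j_{\widetilde{D}}(\log E)$, the key input being the standard local fact that for a proper birational morphism $g\colon Y'\to Y$ of smooth varieties, with $E$ a reduced simple normal crossing divisor on $Y$, with $E'=g^{-1}(E)_{\rm red}$ again simple normal crossing on $Y'$, and with $g$ an isomorphism over $Y\smallsetminus E$, one has
\[
Rg_*\,\Omega^j_{Y'}(\log E')\simeq \Omega^j_Y(\log E)\qquad\text{for every }j.
\]
Granting this, the independence of the resolution is a formal Leray argument, carried out as follows.

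First I would take two resolutions $\mu_1\colon\widetilde{D}_1\to D$ and $\mu_2\colon\widetilde{D}_2\to D$ of the prescribed type, with associated simple normal crossing divisors $E_1,E_2$. Applying resolution of singularities (Hironaka) to the closure of the graph of the birational map $\widetilde{D}_1\dashrightarrow\widetilde{D}_2$ over $D$ inside $\widetilde{D}_1\times_D\widetilde{D}_2$, I obtain a third smooth variety $\widetilde{D}'$ together with proper birational morphisms $g_k\colon\widetilde{D}'\to\widetilde{D}_k$ over $D$, so that $\mu_1\circ g_1=\mu_2\circ g_2=:\mu'$. Since both $\mu_k$ are isomorphisms over the smooth locus of $D$, the map $\widetilde{D}_1\dashrightarrow\widetilde{D}_2$ is an isomorphism over $D\smallsetminus D_{\rm sing}$; performing the resolution by blowing up only along centres lying over $D_{\rm sing}$, each $g_k$ is an isomorphism over $\widetilde{D}_k\smallsetminus E_k$. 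Moreover $g_k^{-1}(E_k)_{\rm red}=(\mu_k\circ g_k)^{-1}(D_{\rm sing})_{\rm red}=\mu'^{-1}(D_{\rm sing})_{\rm red}$ is one and the same divisor $E'$ on $\widetilde{D}'$, which I arrange to be simple normal crossing during the resolution step. Thus $g_1,g_2$ both satisfy the hypotheses of the invariance statement above.

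Next I apply that invariance to each $g_k$ and compose derived pushforwards via $R(\mu_k\circ g_k)_*=R(\mu_k)_*\circ R(g_k)_*$, obtaining in the derived category, for every $j$,
\[
R\mu'_*\,\Omega^j_{\widetilde{D}'}(\log E')\;\simeq\;R(\mu_k)_*\,R(g_k)_*\,\Omega^j_{\widetilde{D}'}(\log E')\;\simeq\;R(\mu_k)_*\,\Omega^j_{\widetilde{D}_k}(\log E_k).
\]
The left-hand side is independent of $k$, so taking cohomology sheaves gives $R^i(\mu_1)_*\Omega^j_{\widetilde{D}_1}(\log E_1)\simeq R^i(\mu_2)_*\Omega^j_{\widetilde{D}_2}(\log E_2)$ for all $i,j$; specializing to $j=n-1-i$ shows that the vanishing asserted in Theorem~\ref{local_strong} holds for $\mu_1$ precisely when it holds for $\mu_2$, which is the claim. (Everything is applied componentwise, as $\widetilde{D}$ is a disjoint union of resolutions of the irreducible components of $D$.)

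The main obstacle is justifying the birational invariance $Rg_*\Omega^j_{Y'}(\log E')\simeq\Omega^j_Y(\log E)$, the log analogue of $Rg_*\Omega^j_{Y'}\simeq\Omega^j_Y$ for birational morphisms of smooth varieties. In the present Hodge-theoretic setting the natural route is to realize $\Omega^j_Y(\log E)$, up to shift and twist, as a graded piece $\Gr^F_p\DR\big(\shO_Y(*E)\big)$ of the filtered de Rham complex of the localization mixed Hodge module $\shO_Y(*E)$, and then to use that, since $g$ is an isomorphism over $Y\smallsetminus E$, one has $Rg_*\shO_{Y'}(*E')\simeq\shO_Y(*E)$ compatibly with Hodge filtrations; strictness of $F$ under proper pushforward then yields the invariance on graded pieces. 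Alternatively, one can factor $g$ into blow-ups along smooth centres having normal crossings with $E$ and reduce to a single such blow-up by an explicit computation; here the delicate point is to keep track of the log poles and verify exactness of the residue and pullback sequences at each stage. Either way, this local statement is well known, and the remainder of the proof is purely formal.
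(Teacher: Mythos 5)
Your proposal is correct and takes essentially the same route as the paper: the paper also dominates the resolutions by a common one (its ``standard argument'' reduces to comparing $\mu$ with $\mu\circ g$), invokes exactly the invariance you isolate, namely $g_*\Omega^i_{D'}(\log E')=\Omega^i_{\widetilde{D}}(\log E)$ and $R^qg_*\Omega^i_{D'}(\log E')=0$ for $q>0$, citing \cite[Lemmas 1.2 and 1.5]{EV} (cf.\ \cite[Theorem~31.1(i)]{MP1}), and concludes via the Leray spectral sequence. The only difference is that you sketch proofs of this key invariance (your Hodge-module sketch is in fact the argument behind the cited \cite[Theorem~31.1(i)]{MP1}), whereas the paper simply quotes it.
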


\begin{proof}
A standard argument shows that it is enough to compare the assertion for $\mu$ and for another resolution with the same properties
of the form $\mu\circ g$, for some morphism $g\colon D'\to \widetilde{D}$. Note that if $E'$ is the reduced inverse image of $D_{\rm sing}$
on $D'$, then $E'=(g^*E)_{\rm red}$ and $g$ is an isomorphism over $\widetilde{D}\smallsetminus {\rm Supp}(E)$. In this case, we have
for all $i$
$$g_*\Omega_{D'}^i(\log E')=\Omega_{\widetilde{D}}^i(\log E)\quad\text{and}\quad R^q\Omega_{D'}^i(\log E')=0\quad\text{for all}\quad q>0$$
by \cite[Lemmas 1.2 and 1.5]{EV}; cf. also \cite[Theorem~31.1(i)]{MP1}. The assertion in the lemma thus follows via the Leray spectral sequence.
\end{proof}

If $D$ is smooth, then $\mu$ is an isomorphism, and we trivially have
$R^i \mu_* \Omega^j_Y (\log E) = 0$ for all $i > 0$ and all $j$. From now on, we focus on the case when $D$ is singular
(in which case recall, as mentioned in the Introduction, that $\widetilde{\alpha}_D \le n /2$, where $\dim(D)=n-1$). 

The proof of Theorem~\ref{local_strong}
 is inspired by the proof of \cite[Theorem~E]{MOP}, which partly treats the case $k=1$.
We begin with an auxiliary result:

\begin{lemma}\label{rel_van_blowup}
Let $g\colon Y\to X$ be the blow-up of a smooth variety $X$ along a smooth, irreducible subvariety $Z$,
of codimension $r\geq 2$. Let $F$ be a reduced simple normal crossing divisor on $X$, having
simple normal crossings with $Z$ as well, and denote by $\widetilde{F}$ the strict transform of
$F$  and by $E$ the exceptional divisor on $Y$. Then for every $i<r$, the following hold:
$$g_*\Omega^i_Y\big(\log (E+\widetilde{F})\big)=\Omega_X^i(\log F)\quad\text{and}\quad R^qg_*\Omega_Y^i\big(\log (E+F)\big)=0
\quad\text{for all}\quad q\geq 1.$$
\end{lemma}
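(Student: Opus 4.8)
The plan is to reduce both identities to an explicit computation on the blow-up of a point in affine space, and to carry that out with the projective-bundle structure of the exceptional divisor. Both sides are coherent sheaves on $X$, the formation of $g_*$ and $R^qg_*$ is local on $X$, and $g$ is an isomorphism over $X\smallsetminus Z$; so it suffices to check the stated isomorphisms after completing at an arbitrary point $z\in Z$, the higher direct images being supported on $Z$. Using that $F$ has simple normal crossings with $Z$, I would pick coordinates $x_1,\dots,x_n$ with $Z=V(x_1,\dots,x_r)$ in which each component of $F$ is a coordinate hyperplane, and separate the components containing $Z$ from those meeting $Z$ transversally. This gives a product picture $X=W\times\AAA^r$, $Z=W\times\{0\}$, with $F$ equal to a divisor pulled back from $W$ together with a union $F_0$ of coordinate hyperplanes of $\AAA^r$ through the origin; correspondingly $Y=W\times Y_0$ with $Y_0=\mathrm{Bl}_0\AAA^r$, and $g=\mathrm{id}_W\times g_0$, $E=W\times E_0$, $E_0=\PP^{r-1}$.

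The decomposition of logarithmic forms on a product, together with flat base change along the projection to $W$, then writes $Rg_*\Omega^i_Y(\log(E+\widetilde F))$ as the direct sum over $a+b=i$ of the sheaves $\Omega^a_W(\log F_W)\otimes Rg_{0*}\Omega^b_{Y_0}(\log(E_0+\widetilde{F_0}))$ (pulled back from the two factors). Since $i<r$, every $b$ appearing here satisfies $b<r$, and the whole lemma is reduced to the model assertion that, for $b<r$, the complex $Rg_{0*}\Omega^b_{Y_0}(\log(E_0+\widetilde{F_0}))$ is concentrated in degree $0$ and equals $\Omega^b_{\AAA^r}(\log F_0)$.

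For the model case I would use that $Y_0$ is the total space of $\mathcal{O}_{\PP^{r-1}}(-1)$, with affine projection $\pi\colon Y_0\to\PP^{r-1}$ and with $E_0$ the zero-section and $\widetilde{F_0}=\pi^{-1}(H)$ the preimage of a coordinate hyperplane arrangement $H\subset\PP^{r-1}$. As $\AAA^r$ is affine and $g_0$ is an isomorphism off the origin, it is enough to compute $H^q(Y_0,\Omega^b_{Y_0}(\log(E_0+\widetilde{F_0})))=H^q(\PP^{r-1},\pi_*\Omega^b_{Y_0}(\log(E_0+\widetilde{F_0})))$. The relative logarithmic cotangent sequence for $\pi$ (whose relative part only sees the horizontal divisor $E_0$, the vertical $\widetilde{F_0}$ being absorbed into the $\log H$ on the base) gives a two-step filtration of $\Omega^b_{Y_0}(\log(E_0+\widetilde{F_0}))$ with graded pieces $\pi^*\Omega^b_{\PP^{r-1}}(\log H)$ and $\pi^*\Omega^{b-1}_{\PP^{r-1}}(\log H)\otimes\Omega^1_{Y_0/\PP^{r-1}}(\log E_0)$. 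Pushing forward and using $\pi_*\mathcal{O}_{Y_0}=\bigoplus_{k\geq0}\mathcal{O}(k)$ and $\pi_*\Omega^1_{Y_0/\PP^{r-1}}(\log E_0)=\bigoplus_{m\geq0}\mathcal{O}(m)$, everything is expressed through the groups $H^q(\PP^{r-1},\Omega^a_{\PP^{r-1}}(\log H)(k))$ with $a\leq b$ and $k\geq0$.

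The main obstacle is exactly this last cohomology, and it is where the hypothesis $b<r$ is decisive. For $k\geq1$ a Bott-type vanishing for logarithmic differentials on projective space gives $H^{\geq1}=0$, so only the summands with $k=0$ (and $m=0$) survive; these two contributions are linked by the connecting map of the filtration, which is cup product with the extension class of the relative sequence, namely the hyperplane class $c_1(\mathcal{O}(1))\in H^1(\PP^{r-1},\Omega^1_{\PP^{r-1}})$. The heart of the argument is that this Lefschetz-type operator identifies the two surviving groups and cancels them for all $q\geq1$, and that the degree-zero part of $\pi_*$ recovers $\Omega^b_{\AAA^r}(\log F_0)$ through the natural pullback map; this cancellation is an isomorphism precisely in the range $b<r$, the extremal case $b=r$ failing because the class $H^{r-1}(\PP^{r-1},\omega_{\PP^{r-1}})\neq0$ is then no longer hit. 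An essentially equivalent but more hands-on route is to peel off the components of $E_0+\widetilde{F_0}$ one at a time by residue sequences, each residue landing on a lower-dimensional blow-up of a point, and to induct on $r$ and on the number of components down to a direct verification; the analytic content of the cancellation is the same in either approach.
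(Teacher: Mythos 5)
Your proposal is correct in outline, but it proves the lemma by a genuinely different route than the paper. The paper argues by induction on the codimension $r$: the cases $i\le 1$ are quoted from \cite[Theorem~31.1(ii)]{MP1}, the case $Z\subseteq {\rm Supp}(F)$ is disposed of for \emph{all} $i$ by the Esnault--Viehweg lemmas \cite[Lemmas 1.2 and 1.5]{EV}, and otherwise an auxiliary smooth divisor $T\supseteq Z$ (locally $V(x_1)$) is introduced; the residue sequence relating $\Omega^i_Y\bigl(\log(E+\widetilde{F})\bigr)$, $\Omega^i_Y\bigl(\log(E+\widetilde{F}+\widetilde{T})\bigr)$ and $\Omega^{i-1}_{\widetilde{T}}$ with log poles then lets one apply the inductive hypothesis to the blow-up $\widetilde{T}\to T$ (where the codimension drops to $r-1$) and the EV lemmas to the middle term, and a comparison of the resulting long exact sequence with the residue sequence on $X$ concludes. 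You instead localize to the model $\mathrm{Bl}_0\,\mathbf{A}^r$ via a K\"unneth splitting and compute directly on the total space of $\mathcal{O}_{\mathbf{P}^{r-1}}(-1)$, using Bott-type vanishing for twisted log forms on $\mathbf{P}^{r-1}$ and the Lefschetz-type cancellation coming from cupping with the extension class $c_1$ of the relative log cotangent sequence; your closing residue-peeling alternative is closer in spirit to the paper, though the paper peels off an auxiliary divisor rather than components of the boundary. Your approach buys more: it is essentially self-contained, it explains \emph{why} the bound $i<r$ is sharp (the cancellation fails exactly at $b=r$ because $h^r=0$ in $H^{\bullet}(\mathbf{P}^{r-1})$, and indeed $R^{r-1}g_*\omega_Y(E)\neq 0$), and it simultaneously handles the case where components of $F$ contain $Z$, where the surviving weight-zero cohomology vanishes outright so no cancellation is even needed. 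The paper's proof is shorter and avoids all explicit projective-space cohomology by leaning on citations. Two points in your sketch deserve tightening if written out: the Bott-type vanishing for $\Omega^a_{\mathbf{P}^{r-1}}(\log H)(k)$, $k\ge 1$, should be justified (e.g.\ by induction on the number of components of $H$ via residue sequences, from Bott's formula), and the phrase ``the degree-zero part of $\pi_*$ recovers $\Omega^b_{\mathbf{A}^r}(\log F_0)$'' is loose --- all $\mathbf{G}_m$-weights $k\ge 0$ contribute to $g_{0*}$, and one must check weight by weight (equivariantly, so that the two-step filtration and its extension class respect the grading) that $H^0$ matches the graded pieces of $\Omega^b_{\mathbf{A}^r}(\log F_0)$ under the natural pullback map; this bookkeeping does work out, but it is where the actual content of the $q=0$ statement sits.
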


\begin{proof}
For $i=0$ the assertion is clear and for
$i=1$ it follows from \cite[Theorem~31.1(ii)]{MP1}, so from now on we assume $i\geq 2$, hence $r\geq 3$.
We argue by induction on $r$. If $Z\subseteq {\rm Supp}(F)$, then the assertion holds for all $i$, using again
\cite[Lemmas 1.2 and 1.5]{EV}. Suppose now that $Z$ is not contained in ${\rm Supp}(F)$.
Since the assertion is local on $X$, we may assume that we have algebraic coordinates $x_1,\ldots,x_n$ on $X$
such that $Z$ is defined by $x_1,\ldots,x_r$ and all components of $F$ are defined by some $x_k$, with $k>r$. 
Let $T$ be the smooth divisor on $X$ defined by $x_1$ and consider the induced morphism $h\colon \widetilde{T}\to T$,
where $\widetilde{T}$ is the strict transform of $T$ on $Y$.  Consider the standard residue short exact sequence on $Y$:
\begin{equation}\label{exact_sequence}
0\to\Omega^i_Y\big(\log (E+\widetilde{F})\big)\to \Omega^i_Y\big(\log (E+\widetilde{F}+\widetilde{T})\big)
\to\Omega^{i-1}_{\widetilde{T}}\big(\log (E\vert_{\widetilde{T}}+\widetilde{F}\vert_{\widetilde{T}})\big)\to 0.
\end{equation}
Note that $h$ is the blow-up of $T$ along $Z$, with exceptional divisor $E\vert_{\widetilde{T}}$.
Moreover, the strict transform of $F\vert_T$ is $\widetilde{F}\vert_{\widetilde{T}}$. Since ${\rm codim}_T(Z)=r-1\geq 2$,
the inductive assumption thus gives
$$h_*\Omega^{i-1}_{\widetilde{T}}\big(\log (E\vert_{\widetilde{T}}+\widetilde{F}\vert_{\widetilde{T}})\big)=
\Omega_T^{i-1}(\log F\vert_T)\quad\text{and}$$
$$R^qh_*\Omega^{i-1}_{\widetilde{T}}\big(\log (E\vert_{\widetilde{T}}+\widetilde{F}\vert_{\widetilde{T}})\big)=0\quad\text{for all}\quad q\geq 1.$$
On the other hand, since $Z\subseteq {\rm Supp}(F+T)$ it follows, again from the reference above, that
$$g_*\Omega^i_Y\big(\log (E+\widetilde{F}+\widetilde{T})\big)=\Omega^i_X\big(\log (F+T)\big)\quad\text{and}$$
$$R^qg_*\Omega^i_Y\big(\log (E+\widetilde{F}+\widetilde{T})\big)=0\quad\text{for all}\quad q\geq 1.$$
The long exact sequence for higher direct images associated to (\ref{exact_sequence}) gives 
$$R^qg_*\Omega^i_Y\big(\log (E+\widetilde{F})\big)=0\quad\text{for all}\quad q\geq 2,$$
together with an exact sequence
$$0\to g_*\Omega^i_Y\big(\log (E+\widetilde{F})\big)\to \Omega^i_X\big(\log (F+T)\big)
\to\Omega_T^{i-1}(\log F\vert_T)$$
$$\to R^1g_*\Omega^i_Y\big(\log (E+\widetilde{F})\big)\to 0,$$
which compared to the standard residue sequence gives the assertions in the lemma.
\end{proof}
 
In order to apply the previous lemma, we will need to control the codimension of the blow-up centers when we have a lower bound on $\widetilde{\alpha}_D$. This is provided by:

\begin{proposition}\label{prop_sing_locus}
If $D$ is a singular effective divisor on $X$ such that $\widetilde{\alpha}_D>k$ for some nonnegative integer $k$, then
we have the following lower bound for the codimension of the singular locus $D_{\rm sing}$ of $D$:
$${\rm codim}_X(D_{\rm sing})\geq 2k+1.$$
\end{proposition}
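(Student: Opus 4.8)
The plan is to reduce the whole statement to the single inequality
$$\widetilde{\alpha}_D \le \tfrac{1}{2}\codim_X(D_{\rm sing}).$$
Once this is established the conclusion is immediate: if $\widetilde{\alpha}_D > k$, then $\codim_X(D_{\rm sing}) \ge 2\widetilde{\alpha}_D > 2k$, and since $\codim_X(D_{\rm sing})$ and $k$ are integers this forces $\codim_X(D_{\rm sing}) \ge 2k+1$. Thus the entire content is to bound the minimal exponent from above by half the codimension of the singular locus, and I would spend the rest of the argument on that.

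To prove the inequality I would use that $\widetilde{\alpha}_D$ equals the minimum over $x \in D_{\rm sing}$ of the \emph{local} minimal exponent $\widetilde{\alpha}_{D,x}$ (which is $+\infty$ at smooth points). Hence it suffices to exhibit a single point $x$ with $\widetilde{\alpha}_{D,x} \le \tfrac12\codim_X(D_{\rm sing})$. I would take $x$ to be a general point of a top-dimensional irreducible component $W$ of $D_{\rm sing}$, and set $c := \codim_X(W) = \codim_X(D_{\rm sing})$, so that $\dim W = n-c$. I would then pass to a general smooth slice: choose $L$, a complete intersection of $n-c$ general hyperplanes, of dimension $c$ and transverse to $W$ at $x$. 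Because $L$ is general of complementary dimension to $W$, in a neighborhood of $x$ the subspace $L$ meets $D_{\rm sing}$ only at $x$, so the restricted divisor $D' := D\cap L$ on $L\cong\CC^c$ has an isolated singularity at $x$.

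The key input is the invariance of the minimal exponent under this transverse slicing, $\widetilde{\alpha}_{D',x} = \widetilde{\alpha}_{D,x}$. The intuition is that at a general point of $W$ the germ $(X,D)$ is analytically isomorphic to a product $(\CC^c\times\CC^{n-c},\, D'\times\CC^{n-c})$, so that a local equation of $D$ does not involve the coordinates along $W$; removing these "constant directions'' leaves the Bernstein--Sato polynomial, and hence the minimal exponent, unchanged. Granting this, $D'$ is a singular hypersurface at $x$ inside the $c$-dimensional smooth $L$, so the upper bound for the minimal exponent at a singular point recalled in the introduction (from \cite{Saito_microlocal}; see also \cite[Theorem~E]{MP3}), applied in $L$, gives $\widetilde{\alpha}_{D',x}\le c/2$. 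Combining, $\widetilde{\alpha}_D \le \widetilde{\alpha}_{D,x} = \widetilde{\alpha}_{D',x} \le c/2 = \tfrac12\codim_X(D_{\rm sing})$, as needed.

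I expect the main obstacle to be exactly the slicing step: justifying that the minimal exponent is unchanged under a general transverse slice to the singular locus, equivalently that $\widetilde{\alpha}$ is preserved under general hyperplane sections that still meet $D_{\rm sing}$. The elementary observation that Bernstein--Sato polynomials are insensitive to adding dummy variables only handles the genuinely split case, so the crux is the generic equisingularity of $D$ along $W$ (the analytic product structure at a general point of the stratum, or equivalently the constancy of $\widetilde{\alpha}_{D,x}$ along $W$ combined with the slice comparison). This is the one place where I would invoke a result on the behavior of minimal exponents under restriction rather than a purely formal manipulation.
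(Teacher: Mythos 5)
Your overall skeleton coincides with the paper's: reduce everything to $\widetilde{\alpha}_D \le \tfrac{1}{2}\codim_X(D_{\rm sing})$ and feed in Saito's bound $\widetilde{\alpha} \le \tfrac{1}{2}\dim$ at a singular point of a lower-dimensional slice. The gap is exactly where you suspected, in the slicing step, and it is genuine for two reasons. First, the claimed analytic product structure at a general point of the top-dimensional stratum $W$ of $D_{\rm sing}$ is false in general: for $D=\{xy(x+y)(x-ty)=0\}\subset \CC^3$, the singular locus is the $t$-axis, but the cross-ratio of the four concurrent lines varies with $t$, so the germ of $D$ at a general point of $D_{\rm sing}$ is not analytically isomorphic to $D'\times\CC$ (only generic \emph{topological} triviality along $W$ is available). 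Second, the inequality your argument actually requires, $\widetilde{\alpha}_{D,x}\le \widetilde{\alpha}_{D',x}$ for the transverse slice $D'=D\cap L$ through a fixed point $x$, goes in the \emph{opposite} direction to what restriction theorems provide: for a smooth hypersurface $H$ through a prescribed point $x$, restriction makes singularities worse, i.e. $\widetilde{\alpha}_{D\cap H,x}\le\widetilde{\alpha}_{D,x}$. Your fallback, generic constancy of $x\mapsto \widetilde{\alpha}_{D,x}$ along $W$ (which is fine, by constructibility of local $b$-functions), does not produce the slice comparison either, so the crucial equality is left unproved.

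The paper circumvents precisely this difficulty by never fixing the point: it iterates sections by \emph{general} members $H$ of a basepoint-free linear system, for which the one-sided inequality $\widetilde{\alpha}_{D\vert_H}\ge\widetilde{\alpha}_D$ does hold (Lemma~\ref{lem_gen_intersection}). That lemma is proved by writing $\widetilde{\alpha}_D=p+\alpha$ with $\alpha\in(0,1]$, noting $I_p(\alpha D)=\shO_X$ by \cite[Corollary~C]{MP3}, and applying the restriction theorem for Hodge ideals of $\QQ$-divisors \cite[Theorem~13.1]{MP2} to get $I_p(\alpha D\vert_H)=\shO_H$; the non-reduced case is handled by the Generic Restriction theorem for multiplier ideals. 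Since a general section drops $\dim D_{\rm sing}$ by exactly one while keeping it nonempty, after $r=\dim D_{\rm sing}$ iterations one obtains a singular divisor $D\vert_Y$ on a smooth $Y$ with $\dim Y=n-r$ and $\widetilde{\alpha}_{D\vert_Y}>k$, and Saito's bound forces $k<\tfrac{1}{2}(n-r)$. Note that an iterated general hyperplane section is, in effect, your general linear slice $L$ meeting $W$ transversally at general points; so your proposal is repaired by replacing the (false) product-structure equality with this one-sided Hodge-ideal restriction inequality, which is all that is needed.
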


To see this, we first prove a general lemma concerning the behavior of $\widetilde{\alpha}_D$ under restriction
to a general hypersurface.

\begin{lemma}\label{lem_gen_intersection}
If $D$ is an effective divisor on $X$ and $H$ is a general smooth hypersurface in $X$ (for example, a general 
member of a basepoint-free linear system), then 
$$\widetilde{\alpha}_{D\vert_H} \geq \widetilde{\alpha}_D.$$
\end{lemma}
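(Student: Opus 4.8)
The plan is to translate the statement about minimal exponents into one about $V$-filtrations by means of \lemmaref{lem_char_min_exponent}, and then to exploit the compatibility of the $V$-filtration with restriction to a general hypersurface. We may assume $\widetilde{\alpha}_f<\infty$, since otherwise $D$, and hence $D|_H$ for general $H$, is smooth and the inequality is trivial. Write $\widetilde{\alpha}_f=p+\alpha$ with $p\ge 0$ an integer and $\alpha\in(0,1]$, let $\iota_H\colon H\hookrightarrow H\times\CC$ be the graph embedding of $f|_H$, and let $\delta_H$ denote the canonical generator of $(\iota_H)_+\shO_H$. By \lemmaref{lem_char_min_exponent} applied on $X$ we have $\partial_t^p\delta\in V^\alpha\iota_+\shO_X$, and by the same lemma applied on $H$ it suffices to prove
$$\partial_t^p\delta\in V^\alpha\iota_+\shO_X\ \Longrightarrow\ \partial_t^p\delta_H\in V^\alpha(\iota_H)_+\shO_H,$$
since this gives $\widetilde{\alpha}_{D|_H}\ge p+\alpha=\widetilde{\alpha}_f$.

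The graph $\Gamma_{f|_H}$ is the transverse intersection $\Gamma_f\cap(H\times\CC)$, so the $\Dmod$-module restriction of $\iota_+\shO_X$ to $H\times\CC$ is automatically well-behaved and, in local coordinates $x_1,\dots,x_n$ with $H=\{x_n=0\}$, is simply reduction modulo $x_n$; it identifies the restriction with $(\iota_H)_+\shO_H$ and sends $\partial_t^i\delta$ to $\partial_t^i\delta_H$. The subtlety --- and the reason genericity of $H$ is indispensable --- is that the $V$-filtration along $t$ is an extra structure that does \emph{not} restrict compatibly for an arbitrary $H$; for instance, slicing $f=x^2+y^3$ by the special line $\{y=0\}$ drops $\widetilde{\alpha}$ from $5/6$ to $1/2$. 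The correct requirement is that $H$ be non-characteristic for the finitely many nonzero nearby- and vanishing-cycle modules ${\rm Gr}_V^\beta(\iota_+\shO_X)$ of \S\ref{scn:NV}, regarded as holonomic $\Dmod_X$-modules supported on $D$; their characteristic varieties are conic Lagrangians in $T^*X$ reflecting the singularities of $D$, and a general member $H$ of a basepoint-free linear system is non-characteristic by a standard genericity argument (Bertini for smoothness, together with a dimension count for the conormal directions).

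The heart of the matter, and the step I expect to be the main obstacle, is then the compatibility of the $V$-filtration with this non-characteristic restriction. I would invoke the fact that the nearby- and vanishing-cycle functors commute with non-characteristic restriction in the category of mixed Hodge modules, compatibly with the Hodge filtration up to a shift; through the identifications of \S\ref{scn:NV} between ${\rm Gr}_V^\beta(\iota_+\shO_X)$ and the modules $\Psi_t,\Phi_{t,1}$, this forces restriction to carry $V^\alpha\iota_+\shO_X$ into $V^\alpha(\iota_H)_+\shO_H$ while preserving the induced Hodge filtrations, which yields the displayed implication. Making this filtered compatibility precise, or locating it cleanly in the literature, is the genuine difficulty; as a consistency check, the $p=0$ part reduces via \eqref{multiplier} to the classical restriction theorem for multiplier ideals, $\I(f^{\alpha-\epsilon})|_H=\I\big((f|_H)^{\alpha-\epsilon}\big)$ for general $H$. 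An equivalent alternative is to appeal to the classical fact that, for a general section, the largest root of the reduced Bernstein--Sato polynomial of $f|_H$ does not exceed that of $\tilde{b}_f(s)$, which gives $\widetilde{\alpha}_{D|_H}\ge\widetilde{\alpha}_D$ directly.
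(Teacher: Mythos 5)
Your reduction via Lemma~\ref{lem_char_min_exponent} is sound, and your strategy is genuinely different from the paper's; but as written there is a real gap exactly at the step you flag. Commutation of the nearby and vanishing cycle functors with non-characteristic restriction, even filtered and up to shift, only controls the graded pieces ${\rm Gr}_V^{\beta}$; to conclude that restriction carries $V^{\alpha}\iota_+\shO_X$ into $V^{\alpha}(\iota_H)_+\shO_H$ --- which is what the implication $\partial_t^p\delta\in V^{\alpha}\Rightarrow\partial_t^p\delta_H\in V^{\alpha}$ requires --- one must still pass from the graded statement to the filtration itself, e.g.\ via the uniqueness characterization of the $V$-filtration together with strictness of the restriction with respect to $(F,V)$. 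The needed statement does exist in the literature: it is the theorem of Dimca, Maisonobe, Saito and Torrelli (\emph{Multiplier ideals, $V$-filtration and transversal sections}, Math.\ Ann.\ \textbf{335} (2006)), which says that for a section transversal to a Whitney stratification of $(X,D)$ --- a condition a general member of a basepoint-free linear system satisfies; note it forces $H$ to avoid zero-dimensional strata, which is why examples like the quadric cone sliced through its vertex are not counterexamples --- the $V$-filtration of $f\vert_H$ along $t$ is the restriction of that of $f$, and moreover $b_{f\vert_H,x}$ divides $b_{f,x}$ for $x\in H$. The divisibility also validates your ``equivalent alternative,'' after the small check that a root of $\tilde{b}_{f\vert_H}$ at $-1$ forces $(s+1)^2\mid b_f$ and hence $\widetilde{\alpha}_f\le 1$. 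With this citation your argument closes, and it even has a small advantage: since Lemma~\ref{lem_char_min_exponent} carries no reducedness hypothesis, you need no separate treatment of non-reduced $D$.

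The paper's proof is softer and stays entirely at the level of ideals, avoiding transversal restriction of $V$-filtrations. It first reduces to $D$ reduced: otherwise $\widetilde{\alpha}_D={\rm lct}(X,D)<1$ and the inequality follows from the generic restriction theorem for multiplier ideals [Lazarsfeld, Theorem~9.5.35] --- essentially your $p=0$ consistency check, promoted to handle the whole non-reduced case. For $D$ reduced it writes $\widetilde{\alpha}_D=p+\alpha$ with $\alpha\in(0,1]$, invokes [MP3, Corollary~C] (which gives $\widetilde{\alpha}_D\ge p+\alpha$ if and only if $I_p(\alpha D)=\shO_X$) together with the generic restriction theorem for Hodge ideals [MP2, Theorem~13.1], namely $I_p(\alpha D\vert_H)=I_p(\alpha D)\cdot\shO_H=\shO_H$, and then applies [MP3, Corollary~C] again on $H$. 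So both routes hinge on a single previously established restriction theorem: the paper's comes from the Hodge-ideals package already running throughout the article, whereas yours requires importing the Dimca--Maisonobe--Saito--Torrelli theorem on $V$-filtrations, which you should cite explicitly rather than leave as ``the genuine difficulty.''
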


\begin{proof}
We may assume that $D$ is reduced: otherwise ${\rm lct}(X,D)<1$, hence ${\rm lct}(X,D)=\widetilde{\alpha}_{D}$ and 
for $H$ general we have
$$\widetilde{\alpha}_{D\vert_H}\geq {\rm lct}(H,D\vert_H)\geq{\rm lct}(X,D),$$
where the second inequality follows, for example, from the Generic Restriction theorem for multiplier ideals,
see \cite[Theorem~9.5.35]{Lazarsfeld}. Supposing now that $D$ is reduced, we appeal to results on Hodge ideals (for $\QQ$-divisors). If we write $\widetilde{\alpha}_D=p+\alpha$,
for some $\alpha\in (0,1]$ and some nonnegative integer $p$,  it follows from \cite[Corollary~C]{MP3} 
that $I_p(\alpha D)=\shO_X$ and since $H$ is general, according to \cite[Theorem~13.1]{MP2} we have
$$I_p(\alpha D\vert_H)=I_p(\alpha D)\cdot\shO_H=\shO_H.$$
Another application of \cite[Corollary~C]{MP3} gives $\widetilde{\alpha}_{D\vert_H}\geq p+\alpha=\widetilde{\alpha}_D$.
\end{proof}

\begin{proof}[Proof of Proposition~\ref{prop_sing_locus}]
We may assume that $X$ is an affine variety. We denote $r=\dim(D_{\rm sing})$. If $r\geq 1$ and $H$ is a general 
hyperplane section of $X$, then $H$ is smooth, $D\vert_H$ is singular, and $\dim\big((D\vert_H)_{\rm sing}\big)=r-1$. Moreover, it follows from
Lemma~\ref{lem_gen_intersection} that $\widetilde{\alpha}_{D\vert_H}>k$. After iterating this $r$ times, we obtain a smooth
subvariety $Y$ of $X$, with $\dim(Y)=n-r$, such that $D\vert_Y$ is a singular effective divisor and 
$\widetilde{\alpha}_{D\vert_Y}>k$.
Since $\widetilde{\alpha}_{D\vert_Y}\leq\frac{1}{2}\dim(Y)$, we conclude that $k<\frac{1}{2}(n-r)$, hence
$${\rm codim}_X(D_{\rm sing})=n-r\geq 2k+1.$$
\end{proof}

We can finally approach our main goal for this section.

\begin{proof}[Proof of Theorem~\ref{local_strong}]
Let $X$ be a smooth variety in which $D$ embeds as a hypersurface. We need to show, equivalently, that if $k$ is a nonnegative integer such that
$\widetilde{\alpha}_D>k$, then
$$R^{n-1-i}  \mu_* \Omega^i_{\widetilde{D}} (\log E) = 0 \,\,\,\,\,\,{\rm for~all} \,\,\,\,\,\,i\leq k.$$

By Lemma~\ref{independence}, the assertion in the theorem is independent of the choice of resolution $\mu$. 
We thus first construct a log resolution $\mu\colon Y\to X$ of the pair $(X,D)$, as a composition
$$Y=X_N\overset{\mu_N}\longrightarrow X_{N-1}\longrightarrow\cdots\longrightarrow X_1\overset{\mu_1}\longrightarrow X_0=X,$$
where 
\begin{enumerate}
\item[i)] Each $\mu_j$ with $1\leq j\leq N$ is the blow-up of a smooth, irreducible subvariety $Z_{j-1}$ of $X_{j-1}$ that lies over $D_{\rm sing}\subseteq X$.
We denote by $F_j$ the exceptional divisor of $X_j\to X$ and by $D_j$ the strict transform of $D$ on $X_j$.
\item[ii)] Each $Z_{j-1}$ with $1\leq j\leq N$ has simple normal crossings with $D_{j-1}+F_{j-1}$.
\end{enumerate}
In particular, we see inductively
that each $X_j$ is smooth and $F_j+D_j$ is a simple normal crossing divisor. 
We may assume that $\widetilde{D}=D_N$ is smooth, so that the induced morphism $\varphi\colon \widetilde{D}\to D$ is a resolution of $D$
that is an isomorphism over $D\smallsetminus D_{\rm sing}$. 
Furthermore, if
$F=F_N$, and $E=F\vert_{\widetilde{D}}$, then $E=\mu^{-1}(D_{\rm sing})_{\rm red}$ and this is a simple normal crossing divisor on $\widetilde{D}$.

\noindent {\bf Claim}. For every $i\leq 2k$, we have
\begin{equation}\label{eq2_version_local_vanishing}
\mu_*\Omega_Y^i(\log F)=\Omega_X^i \,\,\,\,\,\,{\rm and} \,\,\,\,\,\,R^q\mu_*\Omega^i_Y(\log F)=0\,\,\,\,{\rm for ~all}\,\,\,\,q\geq 1.
\end{equation}
To see this, using the Leray spectral sequence, it is enough to show that for every $1\leq j\leq N$ we have
\begin{equation}\label{eq3_version_local_vanishing}
{\mu_j}_*\Omega_{X_j}^i(\log F_j)=\Omega_{X_{j-1}}^i(\log F_{j-1})\,\,\,\,{\rm and}\,\,\,\, R^q{\mu_j}_*\Omega_{X_j}^i(\log F_j)=0\,\,\,\,{\rm for ~all}\,\,\,\, q\geq 1.
\end{equation}
If $Z_{j-1}\subseteq F_{j-1}$, then this follows from \cite[Lemmas 1.2 and 1.5]{EV} (or \cite[Theorem~31.1(i)]{MP1}).
On the other hand, if $Z_{j-1}\not\subseteq F_{j-1}$, then 
$Z_{j-1}$ is equal to the strict transform of its image in $X$.
By construction and Proposition~\ref{prop_sing_locus}, it follows that ${\rm codim}_{X_{j-1}}(Z_{j-1})\geq 2k+1$,
and (\ref{eq3_version_local_vanishing}) then follows from Lemma~\ref{rel_van_blowup}. This proves our claim.

Consider now the residue short exact sequence
$$0\to\Omega_Y^{i+1}(\log F)\to\Omega_Y^{i+1}\big(\log (\widetilde{D}+F)\big)\to\Omega_{\widetilde{D}}^{i}(\log E)\to 0$$
on $Y$, and the following piece in the corresponding long exact sequence for higher direct images:
$$R^{n-1-i}\mu_*\Omega_Y^{i+1}\big(\log (\widetilde{D}+F)\big)\to R^{n-1-i}\varphi_*\Omega_{\widetilde{D}}^i(\log E)
\to R^{n-i}\mu_*\Omega_Y^{i+1}(\log F).$$
Since 
$$i \le k < \widetilde{\alpha}_D \le n /2,$$
the first term vanishes because of Corollary~\ref{main2}.
Since the third term vanishes by the above Claim, we conclude that the middle term vanishes as well. This completes the proof of the theorem.
\end{proof}

\section*{References}
\begin{biblist}

\bib{BS}{article}{
 author = {Budur, Nero},
 author={Saito, Morihiko},
 title = {Multiplier ideals, {$V$}-filtration, and spectrum},
  journal = {J. Algebraic Geom.},
  volume = {14},
      date= {2005},
    number= {2},
     pages = {269--282},
}

		\bib{EV}{article}{
   author={Esnault, H{\'e}l{\`e}ne},
   author={Viehweg, Eckart},
   title={Rev\^etements cycliques},
   conference={
      title={Algebraic threefolds},
      address={Varenna},
      date={1981},
   },
   book={
      series={Lecture Notes in Math.},
      volume={947},
      publisher={Springer, Berlin-New York},
   },
   date={1982},
   pages={241--250},
      }

\bib{KS}{article}{
      author={Kebekus, Stefan},
      author={Schnell, Christian},
      title={Extending holomorphic forms from the regular locus of a complex space to a resolution of singularities},
      journal={preprint arXiv:1811.03644}, 
      date={2018}, 
}

\bib{Lazarsfeld}{book}{
       author={Lazarsfeld, Robert},
       title={Positivity in algebraic geometry II},  
       series={Ergebnisse der Mathematik und ihrer Grenzgebiete},  
       volume={49},
       publisher={Springer-Verlag, Berlin},
       date={2004},
}      

\bib{MOP}{article}{
      author={Musta\c t\u a, Mircea},
      author={Olano, Sebasti\'an},
      author={Popa, Mihnea},
      title={Local vanishing and Hodge filtration for rational singularities},
      journal={preprint 	arXiv:1703.06704, to appear in J. Inst. Math. Jussieu}, 
      date={2017}, 
}

\bib{MP1}{article}{
      author={Musta\c t\u a, Mircea},
      author={Popa, Mihnea},
      title={Hodge ideals},
      journal={preprint arXiv:1605.08088, to appear in Memoirs of the AMS}, 
      date={2016}, 
}

\bib{MP2}{article}{
      author={Musta\c t\u a, Mircea},
      author={Popa, M.},
      title={Hodge ideals for $\QQ$-divisors: birational approach},
      journal={J. de l'\'Ecole Polytechnique}, 
      volume={6},
      date={2019},
      pages={283--328},
}

\bib{MP3}{article}{
      author={Musta\c t\u a, Mircea},
      author={Popa, Mihnea},
      title={Hodge ideals for $\QQ$-divisors, $V$-filtration, and minimal exponent},
      journal={preprint arXiv:1807.01935}, 
      date={2018}, 
}

\bib{Popa}{article}{
      author={Popa, Mihnea},
      title={$\Dmod$-modules in birational geometry},
      journal={Proceedings of the ICM 2018, Rio de Janeiro, vol.2, World Scientific}, 
      date={2018},
      pages={781--806}, 
}

\bib{Saito-MHP}{article}{
   author={Saito, M.},
   title={Modules de Hodge polarisables},
   journal={Publ. Res. Inst. Math. Sci.},
   volume={24},
   date={1988},
   number={6},
   pages={849--995},
}

\bib{Saito_duality}{article}{
   author={Saito, Morihiko},
   title={Duality for vanishing cycle functors},
   journal={Publ. Res. Inst. Math. Sci.},
   volume={25},
   date={1989},
   number={6},
   pages={889--921},
}

\bib{Saito-MHM}{article}{
   author={Saito, M.},
   title={Mixed Hodge modules},
   journal={Publ. Res. Inst. Math. Sci.},
   volume={26},
   date={1990},
   number={2},
   pages={221--333},
}

\bib{Saito-B}{article}{
   author={Saito, Morihiko},
   title={On $b$-function, spectrum and rational singularity},
   journal={Math. Ann.},
   volume={295},
   date={1993},
   number={1},
   pages={51--74},
}

\bib{Saito_microlocal}{article}{
   author={Saito, M.},
   title={On microlocal $b$-function},
   journal={Bull. Soc. Math. France},
   volume={122},
   date={1994},
   number={2},
   pages={163--184},
}

      \bib{Saito-HF}{article}{
   author={Saito, Morihiko},
   title={On the Hodge filtration of Hodge modules},
   journal={Mosc. Math. J.},
   volume={9},
   date={2009},
   number={1},
   pages={161--191},
}

\bib{Saito-MLCT}{article}{
      author={Saito, M.},
	title={Hodge ideals and microlocal $V$-filtration},
	journal={preprint arXiv:1612.08667}, 
	date={2016}, 
}

\bib{Zhang}{article}{
      author={Zhang, Mingyi},
      title={Hodge filtration and Hodge ideals for $\QQ$-divisors with weighted homogeneous isolated singularities},
      journal={preprint arXiv:1810.06656}, 
      date={2018}, 
}

\end{biblist}

\end{document}